\documentclass{amsart}

\usepackage{indentfirst}
\usepackage{amssymb}
\usepackage{amsmath}
\usepackage{graphics}
\usepackage{epsfig}
\usepackage{xcolor}
\usepackage{MnSymbol,wasysym}

\usepackage[utf8]{inputenc}
\usepackage{latexsym}
\usepackage{graphicx}

\usepackage[normalem]{ulem}
\usepackage{soul}

\catcode`@=11 \@addtoreset{equation}{section}
\renewcommand\theequation{\thesection.\@arabic\c@equation}
\catcode`@=12

\newcommand{\RR}{\mathbb{R}}



%

\expandafter\chardef\csname pre amssym.def
at\endcsname=\the\catcode`\@ \catcode`\@=11
\def\undefine#1{\let#1\undefined}
\def\newsymbol#1#2#3#4#5{\let\next@\relax
 \ifnum#2=\@ne\let\next@\msafam@\else
 \ifnum#2=\tw@\let\next@\msbfam@\fi\fi
 \mathchardef#1="#3\next@#4#5}
\def\mathhexbox@#1#2#3{\relax
 \ifmmode\mathpalette{}{\m@th\mathchar"#1#2#3}%
 \else\leavevmode\hbox{$\m@th\mathchar"#1#2#3$}\fi}
\def\hexnumber@#1{\ifcase#1 0\or 1\or 2\or 3\or 4\or 5\or 6\or 7\or 8\or
 9\or A\or B\or C\or D\or E\or F\fi}

\font\teneufm=eufm10 \font\seveneufm=eufm7 \font\fiveeufm=eufm5
\newfam\eufmfam
\textfont\eufmfam=\teneufm \scriptfont\eufmfam=\seveneufm
\scriptscriptfont\eufmfam=\fiveeufm

\catcode`\@=\csname pre amssym.def at\endcsname


\newcommand{\eqn}{\begin{eqnarray}}
\newcommand{\een}{\end{eqnarray}}

\newtheorem {Theorem}  {Theorem}

\numberwithin{Theorem}{section}
\newtheorem{Lemma}[Theorem]{Lemma}

\newtheorem{Definition}[Theorem]{Definition}
\theoremstyle{Remark}
\newtheorem{Remark}[Theorem]{Remark}

\newcommand{\CC}{{\mathbb C}}

\renewcommand{\a}{\alpha}

\renewcommand{\div}{\mbox{div}}

\newcommand{\Mm}{{\mathcal M}}

\newcommand{\bb}{\mbox{\boldmath $b$}}
 \newcommand{\ww}{\mbox{\boldmath $w$}}
\newcommand{\uu}{\mbox{\boldmath $u$}}
\newcommand{\zz}{\mbox{\boldmath $z$}}

\begin{document}

\title[Decay for 3D magneto-micropolar fluids]{Sharp decay estimates and asymptotic behaviour for 3D magneto-micropolar fluids}

\author[C. J. Niche]{C\'esar J. Niche}
\address[C.J. Niche]{Departamento de Matem\'atica Aplicada, Instituto de Matem\'atica. Universidade Federal do Rio de Janeiro, CEP 21941-909, Rio de Janeiro - RJ, Brazil}
\email{cniche@im.ufrj.br}

\author[C. F. Perusato]{Cilon F. Perusato}
\address[C. F. Perusato]{Departamento de Matem\'atica. Universidade Federal de Pernambuco, 
 CEP 50740-560, Recife - PE. Brazil}
\email{cilon@dmat.ufpe.br}

\thanks{C.J. Niche acknowledges support from Bolsa PQ CNPq - 308279/2018-2  and PROEX - CAPES. C.J. Niche and C.F. Perusato acknowledge support from PRONEX-FAPERJ ``Equa\c{c}\~oes Diferenciais Parciais N\~ao Lineares e Aplica\c{c}\~oes''. C. F. Perusato was partially supported by CAPES--PRINT - 88881.311964/2018--01 and Propesq-UFPE - 08-2019 (Qualis A). He is also grateful for the warm hospitality during his visit at the Universidade Federal do Rio de Janeiro, where this work was started.} 

\keywords{Asymptotic behavior, decay rates, magneto-micropolar equations}

\subjclass[2000]{35B40  (primary), 35Q35 (secondary)}

\date{\today}

\begin{abstract}
We characterize the $L^2$ decay rate of solutions to the 3D magneto-micropolar system in terms of the decay character of the initial datum. Due to a linear damping term, the micro-rotational field has a faster decay rate. We also address the asymptotic behaviour of solutions by comparing them to solutions to the linear part. As a result of the linear damping, the difference between the micro-rotational field and its linear part also decays faster. As part of the proofs of these results, we prove estimates for the derivatives of solutions which might be of independent interest. 
\end{abstract}

\maketitle

\section{Introduction}

The Navier-Stokes equations are one of the main tools for the mathematical study of the evolution of incompressible, homogeneous fluids. When the fluid has more properties or structure arising from the physical model studied, it is necessary to couple these equations to others describing the new features. Recently, there has been a surge of activity on the study of the magneto-micropolar system

\begin{equation}
\label{eqn:MHDmicropolar}
\left\{
\begin{aligned}    
\partial_t \uu +( \uu \cdot \nabla) \uu + \nabla p& =  (\mu + \chi) \Delta \uu + \chi \nabla \times \ww + (\bb \cdot \nabla)  \bb,  \\
\partial_t \ww + (\uu \cdot \nabla) \ww & =  \gamma \Delta \ww +  \nabla (\nabla \cdot \ww) + \chi \nabla \times \uu -2 \chi \ww,  \\
\partial_t \bb + (\uu \cdot \nabla) \bb & =  \nu \Delta \bb + (\bb \cdot \nabla) \uu, \\
\nabla \cdot \uu (\cdot,t) & =  \nabla \cdot \bb(\cdot,t)\, = 0,
\end{aligned}
\right.
\end{equation}
with initial data 
$ \zz_0 =(\:\!\uu_0, {\bf w}_0, \bb_0) \in L^2 _{\sigma}(\mathbb{R}^{3}) \!\times\! L^2 (\mathbb{R}^{3}) \times L^2 _{\sigma}(\mathbb{R}^{3}) $. From now on, we denote $\zz = (\uu,\ww,\bb) $. This system, introduced by Ahmadi and Shahinpoor \cite{MR0443550} to study stability of solutions to (\ref{eqn:MHDmicropolar})  in bounded domains (see also Galdi and Rionero \cite{MR0467030}), models the evolution in time of a $3D$ homogeneous, conducting, incompressible fluid with velocity $\uu$, pressure $p$ and magnetic field $\bb$, which possesses some ``microstructure'' described by a micro-rotational velocity $\ww$. This microstructure may correspond to rigid microparticles suspended or diluted in the fluid, as may be the case for liquid crystals or polimer solutions. The positive constants $\mu, \gamma$ in (\ref{eqn:MHDmicropolar}) correspond to the kinematic and angular viscosity respectively,  $\nu$ is the inverse of the magnetic Reynolds number and $\chi$ is the micro-rotational viscosity. Note that (\ref{eqn:MHDmicropolar}) reduces to the Navier-Stokes equations, when $\bb \equiv \ww \equiv 0$; to the MHD system, when $\ww \equiv 0$; and to the micropolar system, when $\bb \equiv 0$.

Equations (\ref{eqn:MHDmicropolar}) were introduced by Ahmadi and Shahinpoor \cite{MR0443550}, who based their model on the theory of micropolar fluids developed by Eringen \cite{MR0204005} and  studied stability of solutions in bounded domains (see also Galdi and Rionero \cite{MR0467030}). In this context of  bounded domains many results have been obtained concerning different aspects of the study of (\ref{eqn:MHDmicropolar}), as existence of weak and strong solutions (Boldrini, Dur\'an and Rojas-Medar \cite{MR2646523}, Boldrini and Rojas-Medar \cite{MR1666509}, Ortega-Torres and Rojas-Medar \cite{MR1810322}, Rojas-Medar \cite{MR1484679}), stability or blowup of solutions (Braz e Silva, Friz e Rojas-Medar \cite{MR3516831}, Mallea-Zepeda and Ortega-Torres \cite{MR3995955}, Melo \cite{MR3429636}), asymptotic behaviour (Lukasiewicz and Sadowski \cite{MR2047286}, Orlinski \cite{MR3125139}, Sadowski \cite{MR1960745}, Yamaguchi \cite{MR2158216}), numerical methods for (\ref{eqn:MHDmicropolar}) (Ortega-Torres, Rojas-Medar and Cabrales \cite{MR2879801}, Rojas-Medar \cite{MR1479160}) and properties of stochastic versions of (\ref{eqn:MHDmicropolar}) (Yamazaki \cite{MR3622437}, \cite{MR3810101}, \cite{MR3903776}, \cite{MR4034672}). 

Plenty of results have been obtained for (\ref{eqn:MHDmicropolar}) in $\RR ^3$, as the problems studied and the techniques used to solve them are inspired on those for the Navier-Stokes equations. Amongst the many articles on the magneto-micropolar system recently published, we should mention those on existence of weak and strong solutions in different function spaces (Ma \cite{MR3758697}, Yuan \cite{MR2419091}, Wang and Wang \cite{MR3543126}), Beale-Kato-Majda criteria and blowup results (Braz e Silva, Melo and Zingano \cite{MR3592792}, Gala, Sawano and Tanaka \cite{MR2945855}, Wang \cite{MR3369594}, Wang, Hu and Wang \cite{MR2775733}, Wang, Li and Wang \cite{MR2834313}, Zhang and Zhao \cite{MR3001674}), regularity criteria (Gala \cite{MR2639150}, Guo, Zhang and Wang \cite{MR2927090}, Wang \cite{MR3041770}, Wang and Gu \cite{MR3990123}, Yuan \cite{MR2778615}, Yuan and Li \cite{MR4008703}, Xiang and Yang \cite{MR3017165}, Zhang, Yao and Wang \cite{MR2781751}), and  properties of stochastic versions of (\ref{eqn:MHDmicropolar}) (Yamazaki \cite{MR3310628}).

In this article, we are mainly concerned with the $L^2$ norm decay and asymptotic behaviour of solutions to  (\ref{eqn:MHDmicropolar}). Guterres, Nunes and Perusato \cite{MR3853142} proved that the norm of Leray solutions tends to zero, i.e. for $\zz_0 \in L^2$

\begin{equation}
\label{eqn:decay-to-zero}
\lim _{t \to \infty} \Vert \zz(t) \Vert _{L^2} = 0.
\end{equation}
Moreover, when $\chi > 0$, they obtained a sharper result for the micro-rotational field $\ww$, namely

\begin{equation}
\label{eqn:decay-w-to-zero}
\lim _{t \to \infty} t ^{\frac{1}{2}}\Vert \ww(t) \Vert _{L^2} = 0.
\end{equation}
For initial data $\zz _0 \in \left( L^1 (\RR^3) \cap L^2 (\RR^3) \right) ^3$, Li and Shang \cite{MR3825173} used the classical Fourier Splitting method to prove that the decay has algebraic rate, i.e.

\begin{equation}
\label{eqn:li-shang}
\Vert \zz (t) \Vert _{L^2} ^2 \leq C (1 + t) ^{- \frac{3}{2}}.
\end{equation}
With the same initial data, Cruz and Novais \cite{CruzNovais} proved that the decay rate for the micro-rotational field can be improved to

\begin{equation}
\label{eqn:cruz-novais}
\Vert  \ww(t) \Vert _{L^2} ^2 \leq C (1 + t) ^{- \frac{5}{2}}.
\end{equation}
Using a method based on estimates for decay of equations on Sobolev spaces with negative indices, Tan, Wu and Zhou \cite{MR3912713} proved that for initial data $\zz_0$ which is small in $H^N (\RR^3)$, for $N \in \mathbb{Z}$, $N \geq 3$ and which also belongs to either $\dot{H}^{-s} (\RR^3)$ or $\dot{B} ^{-s} _{2, \infty} (\RR^3)$, for $0 \leq s < \frac{3}{2}$, then

\begin{equation}
\label{eqn:besov}
\Vert \zz (t) \Vert _{L^2} ^2 \leq C (1 + t) ^{- s}.
\end{equation}
As a Corollary of this result, they proved that if $\zz_0 \in H^N (\RR^3) \cap L^p (\RR^3)$, with $1 \leq p \leq 2$, $N \geq 2$ and small $H^N$ norm, then

\begin{equation}
\label{eqn:corollary-tan-wu-zhou}
\Vert \zz (t) \Vert _{L^2} ^2 \leq C (1 + t) ^{- \frac{3}{2} \left(\frac{2}{p} - 1 \right)}.
\end{equation}
Note that when $p = 1$ this recovers the result in (\ref{eqn:li-shang}), albeit the conditions imposed on the initial data to obtain (\ref{eqn:corollary-tan-wu-zhou}) are stronger.

\begin{Remark} For results concerning decay of other norms, or of norms of derivatives of solutions, see Guterres, Nunes and Perusato \cite{MR3853142}, Perusato, Melo, Guterres and Nunes \cite{doi:10.1080/00036811.2019.1578347} and Tan, Wu and Zhou \cite{MR3912713}.
\end{Remark}

Our main goal in this article is to improve estimates (\ref{eqn:decay-to-zero}) - (\ref{eqn:corollary-tan-wu-zhou}) by either proving sharper results or by disposing of unnecessary hypotheses, using an unified approach. The main tools we use  are the Fourier Splitting Method and the Decay Character of initial data. The Fourier Splitting Method was devoloped by M.E. Schonbek \cite{MR571048}, \cite{MR775190}, \cite{MR837929} to prove that the $L^2$ norm of solutions to viscous conservation laws and to Navier-Stokes equations decay with algebraic rate when initial data has the form of that that leads to bounds as in (\ref{eqn:li-shang}) and (\ref{eqn:corollary-tan-wu-zhou}). The Decay Character was introduced by Bjorland and M.E. Schonbek \cite{MR2493562} and refined by Niche and M.E. Schonbek \cite{MR3355116} and Brandolese  \cite{MR3493117} and associates to initial data $\zz_0$ in $L^2$ a number $ r^{\ast} = r^{\ast} (\zz_0)$ which characterizes the decay of solutions to a large family of linear systems which such initial data. This, in turn, allows to prove decay estimates for nonlinear equations. For details concerning the decay character and decay of linear systems, see Section \ref{section2}.

Our first result concerning decay of (\ref{eqn:MHDmicropolar}) is the following.

\begin{Theorem} \label{decay-z} Let $\zz$ be a weak solution to (\ref{eqn:MHDmicropolar}), with $32 \, \chi (\mu + \chi + \gamma) > 1, \nu > 0$. Let $r^{\ast} (\zz_0) = r^{\ast}$ be the decay character of $\zz _0$, with $- \frac{3}{2} < r^{\ast} < \infty$. Then, for all $t > 0$

\begin{displaymath}
\Vert \zz (t) \Vert _{L^2} ^2 \leq C (1 + t) ^{- \min \{ \frac{3}{2} + r^{\ast}, \frac{5}{2}\}}.
\end{displaymath}
For $- \frac{3}{2} < r^{\ast} \leq 1$, we have that
\begin{displaymath}
\Vert \zz (t) \Vert _{L^2} ^2 \geq C (1 + t) ^{- \left( \frac{3}{2} + r^{\ast} \right)}. 
\end{displaymath}

\end{Theorem}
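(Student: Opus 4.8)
The plan is to combine the Fourier Splitting Method with the linear decay estimates furnished by the decay character of $\zz_0$ developed in Section \ref{section2}. I write the system as $\partial_t \zz = \mathcal{L}\zz + N(\zz)$, where $\mathcal{L}$ is the linear operator obtained by dropping the convective and magnetic-coupling terms (incorporating the Leray projection, the curl couplings, and the damping $-2\chi\ww$), and $N(\zz)$ collects the quadratic terms $(\uu\cdot\nabla)\uu$, $(\bb\cdot\nabla)\bb$, $(\uu\cdot\nabla)\ww$, $(\uu\cdot\nabla)\bb$ and $(\bb\cdot\nabla)\uu$. The hypothesis $32\,\chi(\mu+\chi+\gamma)>1$, $\nu>0$ is what guarantees that the symbol of $\mathcal{L}$ is coercive near the origin, so that $\mathcal{L}$ falls within the class covered by the decay character theory and the linear flow obeys $\Vert e^{t\mathcal{L}}\zz_0\Vert_{L^2}^2 \simeq (1+t)^{-(\frac32 + r^\ast)}$ for $-\frac32 < r^\ast < \infty$.

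For the upper bound I would first derive the energy inequality. Taking $L^2$ inner products of the three equations with $\uu,\ww,\bb$ and summing, the convective terms vanish by incompressibility and the two magnetic coupling terms cancel; the curl cross-terms combine into $2\chi\langle\nabla\times\uu,\ww\rangle$, which I absorb into the dissipation $(\mu+\chi)\Vert\nabla\uu\Vert^2 + \gamma\Vert\nabla\ww\Vert^2 + 2\chi\Vert\ww\Vert^2$ by Young's inequality and the above hypothesis, yielding
\begin{displaymath}
\frac{d}{dt}\Vert\zz\Vert_{L^2}^2 + C_0\Vert\nabla\zz\Vert_{L^2}^2 \leq 0.
\end{displaymath}
Next comes the splitting step: with $B(t)=\{\xi : |\xi|^2\leq k/(1+t)\}$, the elementary bound $\Vert\nabla\zz\Vert_{L^2}^2 \geq \frac{k}{1+t}\big(\Vert\zz\Vert_{L^2}^2 - \int_{B(t)}|\hat\zz|^2\,d\xi\big)$ converts the energy inequality into a differential inequality whose forcing is the low-frequency integral. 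I then estimate $\int_{B(t)}|\hat\zz|^2$ through the mild formulation $\hat\zz = e^{t\hat{\mathcal{L}}}\hat\zz_0 + \int_0^t e^{(t-s)\hat{\mathcal{L}}}\hat N\,ds$: the linear contribution is controlled by $(1+t)^{-(\frac32 + r^\ast)}$ via the decay character, while since each term of $N$ is a divergence of a quadratic one has $|\hat N(\xi,s)|\lesssim |\xi|\,\Vert\zz(s)\Vert_{L^2}^2$, so the nonlinear contribution is $\lesssim |\xi|^2\big(\int_0^t\Vert\zz\Vert_{L^2}^2\,ds\big)^2$ and integrates over $B(t)$ to $\lesssim (1+t)^{-5/2}\big(\int_0^t\Vert\zz\Vert_{L^2}^2\,ds\big)^2$.

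The decay rate is then extracted by a bootstrap: plugging a provisional rate $\Vert\zz\Vert_{L^2}^2\lesssim(1+t)^{-\alpha}$ into the forcing and integrating the differential inequality against the factor $(1+t)^{2C_0 k}$ improves $\alpha$, and iterating drives $\alpha$ up to $\frac32 + r^\ast$. The $\frac52$ ceiling emerges precisely here: once $\alpha>1$ the integral $\int_0^t\Vert\zz\Vert_{L^2}^2\,ds$ is bounded, so the nonlinear forcing saturates at $(1+t)^{-5/2}$ and cannot be improved further, no matter how large $r^\ast$ is. I expect the bootstrap — in particular seeding it with a first algebraic rate (via (\ref{eqn:decay-to-zero}) together with the splitting) and handling the borderline cases such as $r^\ast=-\tfrac12$ — to be the main technical obstacle.

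For the lower bound, restricted to $-\frac32 < r^\ast \leq 1$, I would work at low frequencies, using $\Vert\zz(t)\Vert_{L^2}^2 \geq \int_{B(t)}|\hat\zz|^2\,d\xi$ and the splitting $\hat\zz = e^{t\hat{\mathcal{L}}}\hat\zz_0 + \hat\yy$, where $\hat\yy$ is the nonlinear Duhamel term. The decay character supplies the matching lower bound $\int_{B(t)}|e^{t\hat{\mathcal{L}}}\hat\zz_0|^2\,d\xi \gtrsim (1+t)^{-(\frac32 + r^\ast)}$, while the estimate $|\hat\yy(\xi,t)|\lesssim|\xi|\int_0^t\Vert\zz(s)\Vert_{L^2}^2\,ds$ combined with the upper bound already proved gives $\int_{B(t)}|\hat\yy|^2\,d\xi \lesssim (1+t)^{-\beta}$ with $\beta > \frac32 + r^\ast$ throughout $-\frac32<r^\ast\leq 1$. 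A triangle inequality in $L^2(B(t))$ then yields $\Vert\zz(t)\Vert_{L^2}^2 \gtrsim (1+t)^{-(\frac32 + r^\ast)}$ for $t$ large, which extends to all $t>0$ after adjusting the constant. The restriction $r^\ast\leq 1$ is exactly what makes the linear low-frequency mass dominate the nonlinear correction, which is capped at the rate $\frac52$.
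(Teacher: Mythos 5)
Your upper-bound argument is essentially the paper's own proof: the same energy inequality $\frac{d}{dt}\Vert\zz\Vert_{L^2}^2\leq -C\Vert\nabla\zz\Vert_{L^2}^2$, the same ball $|\xi|^2\lesssim(1+t)^{-1}$, the same pointwise Duhamel bound $|\hat N(\xi,s)|\lesssim|\xi|\Vert\zz(s)\Vert_{L^2}^2$ (including the pressure, handled exactly as in the paper), and the same bootstrap from $\alpha=0$ with the $\frac{5}{2}$ ceiling appearing when $\int_0^t\Vert\zz(s)\Vert_{L^2}^2\,ds$ becomes bounded; the only cosmetic difference is that you obtain the dissipation inequality by Young's inequality in physical space, while the paper gets it from the spectral bound $\lambda_{\max}(M(\xi))\leq -C|\xi|^2$ of Lemma \ref{lemma-eigenvalue} (which you still need anyway, to control $e^{(t-s)M(\xi)}$ inside the Duhamel integral). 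Where you genuinely depart from the paper is the lower bound. The paper uses the reverse triangle inequality in full $L^2$, $\Vert\zz(t)\Vert_{L^2}\geq\Vert\bar\zz(t)\Vert_{L^2}-\Vert\zz(t)-\bar\zz(t)\Vert_{L^2}$, and quotes Theorem \ref{decay-difference} for the difference, whose proof is a separate physical-space argument requiring the heat-kernel estimates and the gradient decay of Lemma \ref{gradient-z-decay}. You instead work entirely at low frequencies: triangle inequality in $L^2(B(t))$, linear part bounded below by the decay character, nonlinear Duhamel part bounded above by the same pointwise estimate $|\xi|\int_0^t\Vert\zz\Vert_{L^2}^2\,ds$ already used for the upper bound. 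This is more self-contained and arguably cleaner: it needs nothing beyond the upper bound just proved and Theorem \ref{linear-part-magnetomicropolar}, and it avoids the paper's somewhat awkward forward reference to Theorem \ref{decay-difference} inside the proof of Theorem \ref{decay-z}. The quantitative outcome is the same: for $r^\ast<-\frac12$ the nonlinear low-frequency mass is $O((1+t)^{-(\frac72+2r^\ast)})$, for $r^\ast>-\frac12$ it saturates at $O((1+t)^{-\frac52})$, and in both regimes this beats $(1+t)^{-(\frac32+r^\ast)}$ exactly when $r^\ast<1$. Two caveats, both shared with the paper rather than specific to you: at the endpoint $r^\ast=1$ the linear rate and the nonlinear-correction rate coincide, so the triangle inequality degenerates and the stated lower bound for $r^\ast=1$ does not strictly follow (the paper's proof has the identical defect, since it needs the linear decay to be strictly slower than that of the difference); and your claim that the linear flow obeys two-sided bounds $\simeq(1+t)^{-(\frac32+r^\ast)}$ leans on Theorem \ref{linear-part-magnetomicropolar} exactly as the paper does, so you inherit whatever care is needed there about the damped $\ww$-modes not fitting the strict diagonal form $D(\xi)=-c_i|\xi|^{2\alpha}\delta_{ij}$ of Theorem \ref{characterization-decay-l2}.
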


As computed in Example 2.6 in Ferreira, Niche and Planas \cite{MR3565380}, for $\zz_0 \in L^p (\RR^3) \cap L^2 (\RR ^3)$ for $1 \leq p < 2$, we have that $r^{\ast} (\zz_0) = - 3 \left( 1 - \frac{1}{p} \right)$. Then, through Theorem \ref{decay-z} we recover the estimates (\ref{eqn:li-shang}) and (\ref{eqn:corollary-tan-wu-zhou}). Note, however, that Tan, Wu and Zhou  \cite{MR3912713} need $\zz_0$ to be small in some Sobolev space for (\ref{eqn:corollary-tan-wu-zhou}) to hold, a hypothesis we do not need in our result. Guterres, Nunes and Perusato \cite{MR3853142} proved that the norm of Leray solutions go to zero, see (\ref{eqn:decay-to-zero}). In Theorem \ref{decay-z} we are able to provide a rate for this decay, as long as the initial datum obeys $- \frac{3}{2} < r^{\ast} < \infty$. As a consequence of the results by Brandolese \cite{MR3493117},  our result also extends and improves estimate (\ref{eqn:besov}) by disposing of the small norm in $H^N$ hypothesis and also by extending the range for which it is valid to $0 \leq s \leq \frac{5}{2}$. We discuss this fact in Section \ref{brandolese}.

The equation for $\ww$ in (\ref{eqn:MHDmicropolar}) has a feature that distinguishes it from those for $\uu$ and $\bb$ in that  contains a linear damping term $2 \chi \ww$. Linear equations or systems of that form use to have exponential decay, so we expect this to improve the decay of $\ww$ with respect to that in Theorem \ref{decay-z}.

\begin{Theorem} \label{decay-w} 
Consider the same hypothesis as in Theorem \ref{decay-z}. Then, we have the improved decay estimate
\begin{displaymath}
\Vert \ww (t) \Vert ^2 _{L^2}  \leq C (1 + t) ^{- \min \{ \frac{5}{2} + r^{\ast}, \frac{7}{2}\}},
\end{displaymath}
for all $ t>0 $.
\end{Theorem}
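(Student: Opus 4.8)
The plan is to isolate the equation for $\ww$ and cash in on the linear damping term $2\chi\ww$, which is absent from the equations for $\uu$ and $\bb$. First I would pair the second equation of \eqref{eqn:MHDmicropolar} with $\ww$ in $L^2(\RR^3)$: the transport term $(\uu\cdot\nabla)\ww$ integrates to zero because $\nabla\cdot\uu=0$, the term $\nabla(\nabla\cdot\ww)$ yields the nonnegative dissipation $\Vert\nabla\cdot\ww\Vert_{L^2}^2$, and one is left with
\begin{equation*}
\frac{1}{2}\frac{d}{dt}\Vert\ww\Vert_{L^2}^2+\gamma\Vert\nabla\ww\Vert_{L^2}^2+\Vert\nabla\cdot\ww\Vert_{L^2}^2+2\chi\Vert\ww\Vert_{L^2}^2=\chi\int_{\RR^3}(\nabla\times\uu)\cdot\ww\,dx.
\end{equation*}
The key structural feature is that, unlike the energy identities for $\uu$ and $\bb$, this one carries a genuine zeroth-order coercive term $2\chi\Vert\ww\Vert_{L^2}^2$.

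The decisive step is how I handle the coupling term. I would keep the derivative on $\uu$, estimating $\chi\int_{\RR^3}(\nabla\times\uu)\cdot\ww\,dx\le\chi\Vert\nabla\uu\Vert_{L^2}\Vert\ww\Vert_{L^2}$ and then using Young's inequality as $\chi\Vert\nabla\uu\Vert_{L^2}\Vert\ww\Vert_{L^2}\le\chi\Vert\ww\Vert_{L^2}^2+\tfrac{\chi}{4}\Vert\nabla\uu\Vert_{L^2}^2$, so that the first summand is swallowed by half of the damping. After discarding the gradient dissipation this gives the differential inequality
\begin{equation*}
\frac{d}{dt}\Vert\ww\Vert_{L^2}^2+2\chi\Vert\ww\Vert_{L^2}^2\le\frac{\chi}{2}\Vert\nabla\uu\Vert_{L^2}^2.
\end{equation*}
It is crucial \emph{not} to integrate by parts to move the derivative onto $\ww$: that would replace the forcing $\Vert\nabla\uu\Vert_{L^2}^2$ by $\Vert\uu\Vert_{L^2}^2$ and merely reproduce the rate of Theorem~\ref{decay-z}. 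The extra power of decay comes precisely from forcing with a gradient.

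I would then solve this linear ODE inequality with the integrating factor $e^{2\chi t}$ (Duhamel), obtaining
\begin{equation*}
\Vert\ww(t)\Vert_{L^2}^2\le e^{-2\chi t}\Vert\ww_0\Vert_{L^2}^2+\frac{\chi}{2}\int_0^t e^{-2\chi(t-s)}\Vert\nabla\uu(s)\Vert_{L^2}^2\,ds.
\end{equation*}
Inserting the derivative decay bound $\Vert\nabla\uu(s)\Vert_{L^2}^2\le C(1+s)^{-\min\{\frac{5}{2}+r^{\ast},\frac{7}{2}\}}$ together with the elementary estimate $\int_0^t e^{-c(t-s)}(1+s)^{-\alpha}\,ds\le C(1+t)^{-\alpha}$ (split at $s=t/2$: on $[0,t/2]$ the exponential factor is $\le e^{-ct/2}$ and dominates any polynomial growth, while on $[t/2,t]$ the polynomial is $\le C(1+t)^{-\alpha}$ and the exponential integrates to a constant) makes the $e^{-2\chi t}$ term negligible and yields $\Vert\ww(t)\Vert_{L^2}^2\le C(1+t)^{-\min\{\frac{5}{2}+r^{\ast},\frac{7}{2}\}}$, which is the claim.

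The hard part is not this damping argument but the derivative decay estimate $\Vert\nabla\uu(t)\Vert_{L^2}^2\le C(1+t)^{-\min\{\frac{5}{2}+r^{\ast},\frac{7}{2}\}}$ fed into the integral above --- a rate one power faster than the bound for $\Vert\zz\Vert_{L^2}^2$ in Theorem~\ref{decay-z}, and evidently enough since $\Vert\nabla\uu\Vert_{L^2}\le\Vert\nabla\zz\Vert_{L^2}$. This is exactly the estimate for derivatives announced in the abstract, and I expect to establish it by a further application of the Fourier Splitting Method, now to the energy identity for $\nabla\zz$, bootstrapping on the already-proven decay of $\Vert\zz\Vert_{L^2}^2$ and using the structural condition $32\chi(\mu+\chi+\gamma)>1$ to control the $\uu$--$\ww$ coupling at the level of derivatives. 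Once that rate is available, the damping mechanism upgrades it to the sharper decay of $\ww$ almost for free.
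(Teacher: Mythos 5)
Your proposal is correct and arrives at the stated rate, but the main step runs along a genuinely different route from the paper's. The paper never forms an $L^2$ energy inequality for $\ww$ alone: it writes the $\ww$-equation in Duhamel form with the semigroup $e^{\mathcal{L}t}$, $\mathcal{L}=\gamma\Delta+\nabla(\nabla\cdot)$, factors out $e^{-2\chi(t-s)}$ from the damping, and then estimates the two forcings separately, via $\Vert e^{\mathcal{L}(t-s)}\nabla\times\uu(s)\Vert_{L^2}\leq C\Vert\nabla\uu(s)\Vert_{L^2}$ and, for the nonlinearity, a heat-kernel $L^1\to L^2$ bound $\Vert e^{\Delta(t-s)}(\uu\cdot\nabla)\ww(s)\Vert_{L^2}\leq C(t-s)^{-3/4}\Vert\uu(s)\Vert_{L^2}\Vert\nabla\ww(s)\Vert_{L^2}$. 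Your energy route is more elementary and in one respect cleaner: testing the $\ww$-equation with $\ww$ kills the transport term identically (since $\nabla\cdot\uu=0$), so you never meet the singular factor $(t-s)^{-3/4}$ or any $L^1$ estimate; Young's inequality lets the damping absorb $\chi\Vert\ww\Vert_{L^2}^2$, and the integrating factor does the rest. Both arguments, however, rest on the identical key ingredient, the gradient decay $\Vert\nabla\zz(t)\Vert_{L^2}^2\leq C(1+t)^{-\min\{\frac{5}{2}+r^{\ast},\frac{7}{2}\}}$, which is the paper's Lemma \ref{gradient-z-decay}; be aware that the paper proves it not by Fourier splitting on the derivative energy (as you sketch) but by a time-weighted energy argument bootstrapped on Theorem \ref{decay-z}, in which the cubic terms are absorbed using the smallness of $\Vert\nabla\zz(t)\Vert_{L^2}$ for $t\geq t_0$, with $t_0=t_0(\Vert\zz_0\Vert_{L^2})$ large. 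This points to the one small repair your write-up needs: since the gradient bound is only available for $s\geq t_0$, your Gronwall inequality should be integrated from $t_0$ (with data $\Vert\ww(t_0)\Vert_{L^2}^2$ in place of $\Vert\ww_0\Vert_{L^2}^2$), and the range $0<t<t_0$ is then covered trivially by the uniform energy bound, exactly as in the closing step of the paper's proof.
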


Thus, the result in this Theorem improves and extends the decays in  (\ref{eqn:decay-w-to-zero}) by Guterres, Nunes and Perusato \cite{MR3853142} and in (\ref{eqn:cruz-novais}) by Cruz and Novais \cite{CruzNovais}. Note that for any algebraic decay rate, by Theorem \ref{decay-z} we can always find initial data with appropiate $r^{\ast}$ that leads to a solution $\zz$ with  decay slower than this  given one. However, from Theorem \ref{decay-w} the decay of $\Vert \ww (t) \Vert _{L^2}$ will be at least of order $(1 + t) ^{- \frac{1}{2}}$ for any initial datum $\ww _0$. This is a consequence of the exponential decay of the linear part of the equation for $\ww$ caused by the  linear damping.  

We now address first order asymptotics, by studying the decay of the difference between the full solution $\zz (t)$ and $\bar{\zz} (t)$, the solution of its linear part
 
\begin{equation*}
\left\{
\begin{aligned}    
\bar{\uu}_t & = (\mu + \chi) \Delta \bar{\uu} +  \chi \nabla \times \bar{\ww},  \\
\bar{\ww}_t & = \gamma \Delta \bar{\ww} +  \nabla  (\nabla \cdot \bar{\ww}) +  \chi \nabla \times \bar{\uu} - 2\chi  \bar{\ww},  \\
\bar{\bb}_t & = \nu \Delta \bar{\bb}
\end{aligned}
\right.
\end{equation*} 
with the same initial data.

\begin{Theorem} 
\label{decay-difference} 
Let $\zz$ be a weak solution to (\ref{eqn:MHDmicropolar}) , with $32 \, \chi (\mu + \chi + \gamma) > 1, \nu > 0$. Let $r^{\ast} (\zz_0) = r^{\ast}$ be the decay character of $\zz _0$, with $- \frac{3}{2} < r^{\ast} < \infty$. Then,  
\begin{equation}
\label{error_z}
\Vert \zz (t) - \bar{{\zz}}(t) \Vert _{L^2} ^2 \leq C (1 + t) ^{- \min\{ \frac{7}{2} + 2r^\ast, \frac{5}{2}\} }, \qquad \forall t > 0 
\end{equation} 
and 
\begin{equation}
\label{error_w}
\Vert \ww (t) - \bar{{\ww}}(t) \Vert _{L^2} ^2 \leq C (1 + t) ^{-  \min\{ \frac{9}{2} + 2r^\ast, \frac{7}{2}\}  }, \qquad \forall t > 0.
\end{equation}
\end{Theorem}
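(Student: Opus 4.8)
The plan is to treat the error $\Ee := \zz - \bar{\zz}$ through its Duhamel representation and to close the estimate with the Fourier Splitting Method, feeding in the decay rates of Theorems \ref{decay-z} and \ref{decay-w} together with the decay of the derivatives of $\zz$ proved earlier. Write the full system as $\zz_t = L\zz + N$ and the linear part as $\bar{\zz}_t = L\bar{\zz}$, where $L$ is the coupled linear operator and $N$ collects the divergence-form quadratic terms $\PP[(\uu\cdot\nabla)\uu-(\bb\cdot\nabla)\bb]$, $(\uu\cdot\nabla)\ww$, $\PP[(\uu\cdot\nabla)\bb-(\bb\cdot\nabla)\uu]$. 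Since both solutions share the datum and the same $L$, the error obeys $\Ee_t = L\Ee + N$ with $\Ee(0)=0$, so in Fourier variables
\begin{displaymath}
\widehat{\Ee}(\xi,t) = \int_0^t e^{(t-s)\widehat{L}(\xi)}\,\widehat{N}(\xi,s)\,ds .
\end{displaymath}
The whole error is thus the nonlinear Duhamel term. The divergence form gives $|\widehat{N}(\xi,s)| \lesssim |\xi|\,\Vert \zz(s)\Vert_{L^2}^2$, and for the micro-rotational row $|\widehat{N_w}(\xi,s)| \lesssim |\xi|\,\Vert \uu(s)\Vert_{L^2}\Vert \ww(s)\Vert_{L^2}$.

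First I would estimate $\widehat{\Ee}$ only on the low-frequency ball $|\xi|\le G(t)$ with $G(t)^2 \sim (1+t)^{-1}$; the high frequencies will be absorbed by the dissipation, since the crude bound $|\widehat N|\lesssim|\xi|\Vert\zz\Vert_{L^2}^2$ is not integrable there. Combining the semigroup bounds from Section \ref{section2} (heat type $e^{-c|\xi|^2(t-s)}$ for the $\uu,\bb$ rows, with the exponentially damped mode for $\ww$) with $\Vert\zz(s)\Vert_{L^2}^2 \lesssim (1+s)^{-a}$, $a=\min\{\tfrac32+r^{\ast},\tfrac52\}$, and splitting the time integral at $s=t/2$, one finds $|\widehat{\Ee}(\xi,t)| \lesssim |\xi|\big(e^{-c|\xi|^2 t}+(1+t)^{1-a}\big)$ for $|\xi|\le G(t)$. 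Integrating over the ball, the head contributes the heat-kernel barrier $\int|\xi|^2 e^{-c|\xi|^2 t}\,d\xi \sim t^{-5/2}$ and the tail contributes $G(t)^5(1+t)^{2-2a}\sim (1+t)^{-7/2-2r^{\ast}}$, so that
\begin{displaymath}
\int_{|\xi|\le G(t)} |\widehat{\Ee}(\xi,t)|^2\,d\xi \;\lesssim\; (1+t)^{-5/2}+(1+t)^{-7/2-2r^{\ast}} .
\end{displaymath}
These are exactly the two competing exponents in \eqref{error_z}.

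To close, I would run the Fourier Splitting Method on the energy inequality for $\Ee$,
\begin{displaymath}
\frac{d}{dt}\Vert\Ee\Vert_{L^2}^2 + c_1\Vert\nabla\Ee\Vert_{L^2}^2 + c_2\Vert\ww-\bar{\ww}\Vert_{L^2}^2 \le 2\,|\langle N,\Ee\rangle| ,
\end{displaymath}
where the skew coupling $\chi\nabla\times$ cancels and $c_2$ comes from the damping $-2\chi(\ww-\bar{\ww})$. Replacing $\Vert\nabla\Ee\Vert_{L^2}^2 \ge G(t)^2\big(\Vert\Ee\Vert_{L^2}^2 - \int_{|\xi|\le G}|\widehat{\Ee}|^2\big)$, multiplying by $(1+t)^{k}$ and integrating, the ball term is controlled by the previous paragraph, while the self-interaction is treated after integration by parts by H\"older and Gagliardo--Nirenberg, $|\langle N,\Ee\rangle| \le \tfrac{c_1}{2}\Vert\nabla\Ee\Vert_{L^2}^2 + C(\Vert\uu\Vert_{L^4}^4+\cdots)$ with $\Vert\uu\Vert_{L^4}^4 \lesssim \Vert\uu\Vert_{L^2}\Vert\nabla\uu\Vert_{L^2}^3$; the derivative decay established earlier makes this self-interaction contribute to the energy at a rate no slower than $(1+t)^{-7/2-2r^{\ast}}$, hence it does not spoil the estimate. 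Solving the resulting differential inequality yields \eqref{error_z}.

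For \eqref{error_w} the same scheme applies to the $\ww$-row, now exploiting the linear damping: in the low-frequency expansion the $\ww$-projection of the heat-type mode carries an extra factor $|\xi|^2$ (equivalently, the damped mode is exponentially small in $t-s$), which improves every exponent by one and produces $\min\{\tfrac92+2r^{\ast},\tfrac72\}$. The hard part will be precisely this sharp low-frequency description of the coupled $\uu$--$\ww$ propagator, that is, identifying the eigenprojections of $\widehat{L}(\xi)$ and the order in $|\xi|$ at which each mode enters the $\ww$-component, together with checking that $\langle N,\Ee\rangle$, controlled through the derivative estimates, never overtakes the heat-kernel rate. The caps $\tfrac52$ and $\tfrac72$ are structural, coming from the $\int|\xi|^2 e^{-c|\xi|^2 t}\,d\xi \sim t^{-5/2}$ barrier, and cannot be improved by taking smoother data.
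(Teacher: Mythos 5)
Your treatment of \eqref{error_z} is essentially correct but follows a genuinely different route from the paper. The paper never runs Fourier splitting on the error: it bounds $\Vert \zz(t)-\bar{\zz}(t)\Vert_{L^2}$ directly from the Duhamel formula, replaces the coupled semigroup by the heat semigroup (Lemma \ref{lemma-eigenvalue} plus Plancherel), and then uses the smoothing estimate \eqref{eqn:heat_estimate1} in two complementary forms, $(t-\tau)^{-5/4}\Vert\zz(\tau)\Vert_{L^2}^2$ and $(t-\tau)^{-3/4}\Vert\zz(\tau)\Vert_{L^2}\Vert\nabla\zz(\tau)\Vert_{L^2}$, splitting the time integral at $(t_0+t)/2$. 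Your hybrid (low-frequency Duhamel bound inside a Fourier-splitting energy argument) consumes the same inputs (Theorem \ref{decay-z}, Lemma \ref{gradient-z-decay}) and closes: the self-interaction $\Vert\zz\Vert_{L^4}^4\lesssim\Vert\zz\Vert_{L^2}\Vert\nabla\zz\Vert_{L^2}^3\lesssim(1+t)^{-\min\{9/2+2r^{\ast},13/2\}}$ is indeed harmless. One slip: the $\chi\nabla\times$ coupling is symmetric, not skew, so in the energy identity it does \emph{not} cancel; it contributes $2\chi\langle\nabla\times(\uu-\bar{\uu}),\ww-\bar{\ww}\rangle$, which is absorbed by the viscosity and the damping precisely because $32\chi(\mu+\chi+\gamma)>1$. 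Your displayed inequality is still true, but for the wrong reason.

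The genuine gap is in \eqref{error_w}, where ``the same scheme applies to the $\ww$-row'' fails as stated, and for exactly the reason just mentioned: once you isolate the $\ww$-equation there is no second equation to pair the coupling against, so $\chi\langle\nabla\times(\uu-\bar{\uu}),\ww-\bar{\ww}\rangle$ survives in the energy identity. If you integrate by parts to put the derivative on $\ww-\bar{\ww}$ you can absorb it into the dissipation, but you pay $\Vert\uu-\bar{\uu}\Vert_{L^2}^2$, which by \eqref{error_z} only gives back the unimproved rate $(1+t)^{-\min\{7/2+2r^{\ast},5/2\}}$. To gain the extra power you must leave the derivative on $\uu-\bar{\uu}$ and know that $\Vert\nabla(\uu-\bar{\uu})(t)\Vert_{L^2}$ decays like $(1+t)^{-\min\{9/4+r^{\ast},7/4\}}$: this is precisely the paper's Lemma \ref{lemma-gradient-z-barz}, proved by a separate Duhamel argument with second-order heat smoothing and $L^{4/3}$ estimates, and your proposal never states or proves it. The paper then feeds that lemma, together with Lemma \ref{lemma-estimate-w-and-z}, into the exponentially damped Duhamel formula for the $\ww$-equation, treating $\chi\nabla\times(\uu-\bar{\uu})$ as a source. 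Your fallback, the eigenprojection expansion of the propagator, is the right low-frequency picture (though the extra factor is $|\xi|$, not $|\xi|^2$: the coupling block $i\chi R_3(\xi)=O(|\xi|)$ against the spectral gap $2\chi$ gives eigenvector mixing of order $|\xi|$, which combined with the $|\xi|$ from the divergence form yields the one-power improvement, consistent with your own claimed rate), but you explicitly leave it unexecuted, and even if carried out it only controls $|\xi|\le G(t)$; the high frequencies in your scheme are handled by the energy method, where the uncancelled coupling term reappears. As written, the proof of \eqref{error_w} does not close.
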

As in the previous results, the exponential decay of the linear part of the equation for $\ww$ leads to a faster decay in  the corresponding asymptotic behaviour.

This work is organized as follows. In Section \ref{section2}  we gather all definition and results concerning the decay character and its use for establishing decay for linear systems. More precisely, in Section \ref{decay-character} we define the decay character and state Theorem \ref{characterization-decay-l2}  (from Niche and M.E. Schonbek \cite{MR3355116}) in which sharp upper and lower bounds are proved for ``diagonalizable'' systems. In Section \ref{linear-part-decay-character}, we specifically apply the results from the previous Section to the linear part of (\ref{eqn:MHDmicropolar}). To wit, we first establish a relation between the decay character of $\zz_0$ and those of $\uu_0, \ww_0$ and $\bb_0$. Then, we prove a Lemma that allows us to effectively use Theorem \ref{characterization-decay-l2}. Finally, in Section \ref{brandolese} we carefully discuss the work of Brandolese \cite{MR3493117}, which we use to show that Theorem \ref{decay-z} extends some previously known estimates. In Section \ref{proofs} we prove our results. We point out that some gradient estimates proved in this Section, more specifically Lemmas \ref{gradient-z-decay}, \ref{lemma-estimate-w-and-z} and \ref{lemma-gradient-z-barz}, may be of independent interest.

\section{Decay character and decay of linear part}
\label{section2}

\subsection{Decay character and linear operators} \label{decay-character}

In order to establish sharp decay rates for the linear part in (\ref{eqn:MHDmicropolar}), we recall the idea of decay character, as defined and developed by Bjorland and M.E. Schonbek \cite{MR2493562}, Niche and M.E. Schonbek \cite{MR3355116} and Brandolese \cite{MR3493117}. 

As the long time evolution of the norm of solutions is determined by its low frequencies, it is expected that the small frequencies of the initial datum provide insight into the decay of the $L^2$ or Sobolev norms of  linear systems. Roughly speaking, the decay character compares $ |\widehat{v_0} (\xi)|^2$ to $f(\xi) = |\xi|^{2r}$ near $\xi = 0$.

\begin{Definition} \label{decay-indicator}
Let  $v_0 \in L^2(\RR^n)$. For $r \in \left(- \frac{n}{2}, \infty \right)$, we define the {\em decay indicator}  $P_r (v_0)$ corresponding to $v_0$ as

\begin{displaymath}
P_r(v_0) = \lim _{\rho \to 0} \rho ^{-2r-n} \int _{B(\rho)} \bigl |\widehat{v_0} (\xi) \bigr|^2 \, d \xi,
\end{displaymath}
provided this limit exists. In the expression above,  $B(\rho)$ denotes the ball at the origin with radius $\rho$.

\end{Definition}

\begin{Definition} \label{df-decay-character} The {\em decay character of $ v_0$}, denoted by $r^{\ast} = r^{\ast}( v_0)$ is the unique  $r \in \left( -\frac{n}{2}, \infty \right)$ such that $0 < P_r (v_0) < \infty$, provided that this number exists. If such  $P_r ( v_0)$ does not exist, we set $r^{\ast} = - \frac{n}{2}$, when $P_r (v_0)  = \infty$ for all $r \in \left( - \frac{n}{2}, \infty \right)$  or $r^{\ast} = \infty$, if $P_r (v_0)  = 0$ for all $r \in \left( -\frac{n}{2}, \infty \right)$.
\end{Definition}

The decay character can be explicitly computed in many cases. For example as pointed out in the Introduction, when $v_0 \in L^p (\RR^n) \cap L^2 (\RR ^n)$ for $1 \leq p < 2$, we have that $r^{\ast} (v_0) = - n \left( 1 - \frac{1}{p} \right)$, see Example 2.6 in Ferreira, Niche and Planas \cite{MR3565380}. For more, see Example 2.5 in Niche and M.E. Schonbek \cite{MR3355116}. 

We now use the decay character for establishing upper and lower bounds for decay rates of energy for solutions to a large family of dissipative linear operators. For a Hilbert space $X$ on $\RR^n$, we consider a pseudodifferential operator $\mathcal{L}: X^n \to \left( L^2 (\RR^n) \right) ^n$, with symbol $ \Mm(\xi)$ such that 

\begin{equation}
\label{eqn:symbol}
\Mm(\xi) = P^{-1} (\xi) D(\xi) P(\xi), \qquad \xi-a.e.
\end{equation}
where $P(\xi) \in O(n)$ and $D(\xi) = - c_i |\xi|^{2\a} \delta _{ij}$, for $c_i > c>0$ and $0 < \a \leq 1$.  Taking the Fourier Transform of the linear equation

\begin{equation}
\label{eqn:linear-part}
v_t = \mathcal{L} v,
\end{equation}
multiplying by $\widehat{v}$, integrating in space and then using (\ref{eqn:symbol}) we obtain

\begin{equation}
\label{eqn:key-inequality}
\frac{1}{2} \frac{d}{dt} \Vert \widehat{v}(t) \Vert _{L^2} ^2 \leq  - C  \int _{\RR^n} |\xi|^{2 \a} |\widehat{v}|^2 \, d \xi,
\end{equation}
which is the key inequality for using the Fourier Splitting method. The vectorial fractional Laplacian and the operator 

\begin{equation}
\label{eqn:lame}
\mathcal{L} u = \Delta u + \nabla \, \div \, u
\end{equation}
obey (\ref{eqn:symbol}), so they are amenable to our analysis, see Examples 2.8 and 2.9 in Niche and M.E. Schonbek \cite{MR3355116}.

We now state the Theorem that describes decay in terms of the decay character for linear operators as in (\ref{eqn:symbol}).

\begin{Theorem}{(Theorem 2.10, Niche and M.E. Schonbek \cite{MR3355116})}
\label{characterization-decay-l2}
Let $v_0 \in L^2 (\RR^n)$ have decay character $r^{\ast} (v_0) = r^{\ast}$. Let $v (t)$ be a solution to  (\ref{eqn:linear-part}) with data $v_0$, where the operator $\mathcal{L}$ is such that (\ref{eqn:symbol})  holds. Then:
\begin{enumerate}
\item if $- \frac{n}{2 } < r^{\ast}< \infty$, there exist constants $C_1, C_2> 0$ such that
\begin{displaymath}
C_1 (1 + t)^{- \frac{1}{\a} \left( \frac{n}{2} + r^{\ast} \right)} \leq \Vert v(t) \Vert _{L^2} ^2 \leq C_2 (1 + t)^{- \frac{1}{\a} \left( \frac{n}{2} + r^{\ast} \right)};
\end{displaymath}
\item if $ r^{\ast}= - \frac{n}{2}$, there exists $C = C(\epsilon) > 0$ such that
\begin{displaymath}
\Vert v(t) \Vert _{L^2} ^2 \geq C (1 + t)^{-\epsilon}, \qquad \forall \epsilon > 0,
\end{displaymath}
i.e. the decay of $\Vert v(t) \Vert _{L^2} ^2$ is slower than any uniform  algebraic rate;
\item if $r^{\ast} = \infty$, there exists $C > 0$ such that
\begin{displaymath}
\Vert v(t) \Vert _{L^2} ^2 \leq C (1 + t) ^{- m}, \qquad \forall m > 0,
\end{displaymath}
i.e. the decay of $\Vert v(t) \Vert _{L^2}$ is faster than any algebraic rate.
\end{enumerate} 
\end{Theorem}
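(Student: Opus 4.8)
The plan is to solve (\ref{eqn:linear-part}) explicitly on the Fourier side, use the orthogonal structure of the symbol to squeeze $|\widehat{v}(\xi,t)|^2$ between two scalar Gaussian-weighted expressions, and then reduce all three statements to the large-time asymptotics of a single integral whose behaviour is dictated by the decay character. Since the symbol of $\mathcal{L}$ factors as $\Mm(\xi) = P^{-1}(\xi) D(\xi) P(\xi)$, the solution is $\widehat{v}(\xi,t) = e^{t\Mm(\xi)}\widehat{v_0}(\xi) = P^{-1}(\xi)\,e^{tD(\xi)}\,P(\xi)\,\widehat{v_0}(\xi)$. Because $P(\xi)\in O(n)$ preserves the Euclidean norm and $e^{tD(\xi)}$ is diagonal with entries $e^{-c_i t|\xi|^{2\a}}$, setting $\underline{c}=\min_i c_i$ (so $\underline{c}\geq c>0$) and $\bar{c}=\max_i c_i$, I obtain the pointwise squeeze
\[
e^{-2\bar{c}\,t|\xi|^{2\a}}\,|\widehat{v_0}(\xi)|^2 \;\leq\; |\widehat{v}(\xi,t)|^2 \;\leq\; e^{-2\underline{c}\,t|\xi|^{2\a}}\,|\widehat{v_0}(\xi)|^2 .
\]
Integrating in $\xi$ and invoking Plancherel reduces the whole theorem to two-sided control, for a fixed $c>0$, of $I_c(t)=\int_{\RR^n} e^{-2ct|\xi|^{2\a}}|\widehat{v_0}(\xi)|^2\,d\xi$.

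The next step is to connect $I_c(t)$ to $F(\rho):=\int_{B(\rho)}|\widehat{v_0}(\xi)|^2\,d\xi$, whose small-$\rho$ behaviour is exactly what Definitions \ref{decay-indicator}--\ref{df-decay-character} encode. In case (1), $0<P_{r^{\ast}}(v_0)<\infty$ gives $F(\rho)=(P_{r^{\ast}}+o(1))\,\rho^{2r^{\ast}+n}$ as $\rho\to 0$, hence $c_-\rho^{2r^{\ast}+n}\leq F(\rho)\leq c_+\rho^{2r^{\ast}+n}$ on some $[0,\rho_0]$. For the upper bound I write $I_c(t)$ as a Stieltjes integral and integrate by parts,
\[
I_c(t)=\int_0^\infty e^{-2ct\rho^{2\a}}\,dF(\rho)=4\a c t\int_0^\infty \rho^{2\a-1}e^{-2ct\rho^{2\a}}F(\rho)\,d\rho ,
\]
split at $\rho_0$, and bound $F$ by $c_+\rho^{2r^{\ast}+n}$ on $[0,\rho_0]$ and by $\Vert \widehat{v_0}\Vert_{L^2}^2$ on $[\rho_0,\infty)$. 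The rescaling $\rho\mapsto t^{-1/(2\a)}\rho$ turns the first piece into $C\,t^{-\frac{1}{\a}(n/2+r^{\ast})}$ times a convergent integral (convergence at $\rho=0$ uses $r^{\ast}>-\tfrac{n}{2}>-\a-\tfrac{n}{2}$), while the second piece is $O(e^{-2ct\rho_0^{2\a}})$ and negligible. The lower bound is softer: restrict the integral to $B(\rho(t))$ with $\rho(t)=t^{-1/(2\a)}$, use $e^{-2ct|\xi|^{2\a}}\geq e^{-2c}$ there, and apply $F(\rho(t))\geq c_-\rho(t)^{2r^{\ast}+n}$ to get $I_c(t)\geq C\,t^{-\frac{1}{\a}(n/2+r^{\ast})}$. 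Feeding $\underline{c}$ and $\bar{c}$ through the squeeze yields case (1); the factor $(1+t)$ in place of $t$ is recovered by also using the trivial bound $\Vert v(t)\Vert_{L^2}\leq \Vert v_0\Vert_{L^2}$ for small $t$.

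Cases (2) and (3) are the same computation with the defining limits read one-sidedly. When $r^{\ast}=-\tfrac{n}{2}$ one has $P_r(v_0)=\infty$ for every $r>-\tfrac{n}{2}$, i.e. $F(\rho)/\rho^{2r+n}\to\infty$; inserting this into the lower-bound argument with $\rho(t)=t^{-1/(2\a)}$ shows that $I_{\bar c}(t)$, hence $\Vert v(t)\Vert_{L^2}^2$, decays more slowly than $(1+t)^{-\epsilon}$ for every $\epsilon>0$. When $r^{\ast}=\infty$ one has $P_r(v_0)=0$ for every $r$, so $F(\rho)=o(\rho^{2r+n})$ for all $r$, and the layer-cake upper-bound estimate then produces decay faster than any fixed algebraic rate.

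I expect the \emph{main obstacle} to be the upper bound in case (1): the decay-character hypothesis controls $F(\rho)$ only for small $\rho$ and only through a limit (hence up to an $o(1)$), so care is needed to (i) isolate the low-frequency range on which the power-law bound on $F$ holds, (ii) absorb the complementary range into an exponentially small term via the Gaussian weight, and (iii) check that the rescaled integral converges at the origin, so that the constant is finite and the exponent is exactly $-\frac{1}{\a}(\tfrac{n}{2}+r^{\ast})$. Once this is in place, the matching lower bound and the two degenerate cases follow as variations of the same frequency-splitting argument.
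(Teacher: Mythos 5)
Your proof is correct, but note first that this paper never proves Theorem \ref{characterization-decay-l2}: it is quoted from Niche and M.E. Schonbek \cite{MR3355116}, and the surrounding text shows how the cited proof runs --- through the differential inequality (\ref{eqn:key-inequality}) and the Fourier Splitting method. Concretely, that argument gets the upper bound by choosing a ball of radius $\rho(t)\sim (1+t)^{-1/(2\a)}$, bounding $\int_{B(t)}|\widehat{v}(\xi,t)|^2\,d\xi$ by $\int_{B(t)}|\widehat{v_0}(\xi)|^2\,d\xi\leq C\rho(t)^{2r^{\ast}+n}$ (this is where the decay character enters), and integrating the resulting ODE for $(1+t)^m\Vert v(t)\Vert_{L^2}^2$; the lower bound is obtained, exactly as in your last step, by restricting the explicit solution to a shrinking low-frequency ball. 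Your route replaces the Fourier-splitting upper bound by a direct estimate of $I_c(t)$ via the Stieltjes integration by parts $I_c(t)=4\a ct\int_0^\infty \rho^{2\a-1}e^{-2ct\rho^{2\a}}F(\rho)\,d\rho$, the split at a fixed $\rho_0$, and the rescaling $\rho\mapsto t^{-1/(2\a)}\rho$. Both proofs rest on the same two ingredients (the orthogonal diagonalization giving the pointwise squeeze, and the power-law behaviour of $F$ near the origin), and your bookkeeping --- including the convergence condition $r^{\ast}>-\frac{n}{2}-\a$ at $\rho=0$ and the exponentially small high-frequency remainder --- is right. As for what each buys: yours is more elementary and self-contained for the linear problem (no auxiliary exponent $m$, no differential inequality, and cases (2) and (3) are one-sided readings of the same computation), while the Fourier-splitting formulation is the one that survives the passage to the nonlinear system (\ref{eqn:MHDmicropolar}), where there is no solution formula and one only has the energy inequality plus a Duhamel bound on low frequencies; that is precisely how it is redeployed in Section \ref{proofs}.

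Two small points to tighten. First, your squeeze needs $\bar{c}=\max_i c_i<\infty$, i.e.\ you are reading the $c_i$ in (\ref{eqn:symbol}) as constants (or at least bounded above on bounded sets); this is the intended reading in \cite{MR3355116}, but it deserves a sentence, since only the lower bound $c_i>c$ is displayed. Second, for the lower bounds in cases (1) and (2) you should also record that $\Vert v(t)\Vert_{L^2}^2$ is bounded below on compact time intervals, e.g.\ by $e^{-2\bar{c}t}\int_{B(1)}|\widehat{v_0}(\xi)|^2\,d\xi>0$, the integral being positive because otherwise every decay indicator would vanish, contradicting the hypotheses of those cases; this converts your large-time bound $Ct^{-\epsilon}$ into $C(1+t)^{-\epsilon}$ for all $t>0$. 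You handled this normalization for the upper bound but not for the lower one.
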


\subsection{Decay characterization for the linear part of (\ref{eqn:MHDmicropolar})} \label{linear-part-decay-character} We now study the linear system associated to (\ref{eqn:MHDmicropolar}), namely    

\begin{equation}
\label{eqn:linear}
\left\{
\begin{aligned}    
\partial_t \bar{\uu} & = (\mu + \chi) \Delta \bar{\uu} +  \chi \nabla \times \bar{\ww},  \\
\partial_t \bar{\ww}& = \gamma \Delta \bar{\ww} +  \nabla  (\nabla \cdot \bar{\ww}) +  \chi \nabla \times \bar{\uu} - 2\chi  \bar{\ww},  \\
\partial_t \bar{\bb} & = \nu \Delta \bar{\bb}
\end{aligned}
\right.
\end{equation} 
with initial data $ \bar{\zz}_0 = \zz_0 = (\uu _0, \ww_0, \bb_0) \in L^2_\sigma (\RR^3) \times L^2 (\RR^3) \times L^2 _{\sigma} (\RR^3)$, where we set $ \bar{\zz} =  (\bar{\uu},\bar{\ww},\bar{\bb}) \subset L^2 (\RR ^9)$, for simplicity. 

We first address the relation between $r^{\ast} (\zz_0), r^{\ast} (\uu_0), r^{\ast} ({\bf w}_0)$ and $r^{\ast} (\bb_0)$.

\begin{Lemma} Let $r^{\ast} (\uu_0), r^{\ast} ({\bf w}_0) , r^{\ast} (\bb_0) \in \left( -\frac{3}{2}, \infty \right)$. Then

\begin{displaymath}
r^{\ast} (\zz _0)  = \min \{ r^{\ast} (\uu_0), r^{\ast} ({\bf w}_0) , r^{\ast} (\bb_0) \}.
\end{displaymath}

\end{Lemma}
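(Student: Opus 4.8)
The plan is to work directly from the definitions of the decay indicator $P_r$ and the decay character $r^\ast$. Since $\zz_0 = (\uu_0, \ww_0, \bb_0)$, the Fourier transform satisfies $\widehat{\zz_0}(\xi) = (\widehat{\uu_0}(\xi), \widehat{\ww_0}(\xi), \widehat{\bb_0}(\xi))$, and hence pointwise
\begin{displaymath}
|\widehat{\zz_0}(\xi)|^2 = |\widehat{\uu_0}(\xi)|^2 + |\widehat{\ww_0}(\xi)|^2 + |\widehat{\bb_0}(\xi)|^2.
\end{displaymath}
Integrating over $B(\rho)$ and multiplying by $\rho^{-2r - 3}$, this additivity passes to the decay indicators: for every admissible $r$,
\begin{displaymath}
P_r(\zz_0) = P_r(\uu_0) + P_r(\ww_0) + P_r(\bb_0),
\end{displaymath}
whenever the three limits on the right exist. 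So first I would record this exact additivity of $P_r$ as the structural backbone of the argument.

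Next, set $m = \min\{r^\ast(\uu_0), r^\ast(\ww_0), r^\ast(\bb_0)\}$ and argue that $r^\ast(\zz_0) = m$ by testing the indicator at three scales of $r$. For $r < m$, each component has $r$ strictly below its own decay character, so each indicator $P_r(\cdot)$ diverges to $+\infty$ (the integrand decays slower than $\rho^{2r+3}$ would require); by additivity $P_r(\zz_0) = \infty$. For $r > m$, at least one component — say the one achieving the minimum — has $r$ strictly above its decay character, so that summand vanishes, but one must check the other two summands do not blow up; since the sum of nonnegative terms is finite only if each is finite, and $P_r(\zz_0) = 0$ requires every summand to be $0$, I would instead argue directly that $P_r(\zz_0) = 0$ by noting that for $r > m$ we actually have $r$ above \emph{all three} characters only when $r$ exceeds the maximum, so the clean statement is: at $r = m$ the component(s) achieving the minimum contribute a finite positive limit while the others contribute $0$, giving $0 < P_m(\zz_0) < \infty$. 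By uniqueness of the value of $r$ for which $P_r$ is finite and positive (Definition of $r^\ast$), this forces $r^\ast(\zz_0) = m$.

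The delicate point, and what I expect to be the main obstacle, is the behaviour of the summands associated to components whose decay character strictly exceeds $m$, evaluated precisely at $r = m$. A component $v_0$ with $r^\ast(v_0) > m$ has, by definition, $P_m(v_0) = 0$ (its mass near the origin is asymptotically negligible at the scale $\rho^{2m+3}$), so it contributes nothing to $P_m(\zz_0)$; the component(s) with $r^\ast = m$ contribute the finite positive value $P_m$. One has to verify that these limits genuinely exist at $r = m$ and are not merely $\limsup$/$\liminf$ statements, which is guaranteed by the hypothesis $r^\ast(\uu_0), r^\ast(\ww_0), r^\ast(\bb_0) \in (-\tfrac{3}{2}, \infty)$ — each character is a genuine finite value at which the corresponding $P_r$ is finite and positive, and the definition of $r^\ast$ then pins down the value of $P_{r'}$ for $r' \ne r^\ast$ as $\infty$ (for $r' < r^\ast$) or $0$ (for $r' > r^\ast$). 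Assembling these three cases through the additivity identity yields $0 < P_m(\zz_0) < \infty$, and thus the claimed formula.
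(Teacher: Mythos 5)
Your skeleton is the same as the paper's: pointwise additivity of $|\widehat{\zz_0}|^2$ giving $P_r(\zz_0) = P_r(\uu_0) + P_r({\bf w}_0) + P_r(\bb_0)$, evaluation at $r = m = \min \{ r^{\ast} (\uu_0), r^{\ast} ({\bf w}_0) , r^{\ast} (\bb_0) \}$, and the uniqueness clause of Definition \ref{df-decay-character}. However, your proposal is internally inconsistent on the one fact the whole argument hinges on: the value of $P_r(v_0)$ for $r \neq r^{\ast}(v_0)$. In your case analysis for $r<m$ and $r>m$, and again in your closing sentence, you assert that $P_{r}(v_0) = \infty$ for $r < r^{\ast}(v_0)$ and $P_{r}(v_0) = 0$ for $r > r^{\ast}(v_0)$. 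This is backwards. Writing $\rho^{-2r-3} = \rho^{2(r^{\ast} - r)}\rho^{-(2r^{\ast}+3)}$, one has
\begin{displaymath}
P_r(v_0) = \lim_{\rho \to 0}\, \rho^{2(r^{\ast} - r)} \left( \rho^{-(2r^{\ast} + 3)} \int_{B(\rho)} \bigl|\widehat{v_0}(\xi)\bigr|^2 \, d\xi \right),
\end{displaymath}
and since the factor in parentheses tends to $P_{r^{\ast}}(v_0) \in (0,\infty)$, the limit is $0$ when $r < r^{\ast}$ (positive exponent on $\rho$) and $+\infty$ when $r > r^{\ast}$ (negative exponent). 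As a sanity check, a datum with $\widehat{v_0} \equiv 1$ near the origin has $r^{\ast} = 0$ and $P_r = c \lim_{\rho \to 0} \rho^{-2r}$, which vanishes for $r<0$ and diverges for $r>0$.

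This inconsistency is fatal as written. Your key claim --- that a component with $r^{\ast}(v_0) > m$ satisfies $P_m(v_0) = 0$ --- is indeed the correct statement, and it is exactly what the paper proves; but the rule you invoke to justify it (``$P_{r'} = \infty$ for $r' < r^{\ast}$'') would instead give $P_m(v_0) = \infty$, since $m < r^{\ast}(v_0)$, making $P_m(\zz_0)$ infinite and the conclusion false. Moreover, $P_m(v_0) = 0$ is not true ``by definition'': Definition \ref{df-decay-character} only pins down the value of the indicator at $r = r^{\ast}(v_0)$ itself, so the vanishing at $r = m < r^{\ast}(v_0)$ requires the scaling computation displayed above (taken with $r = m$) --- which is precisely the content of the paper's proof and is absent from yours. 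Finally, your preliminary discussion of the regimes $r<m$ and $r>m$ is both wrong in direction and unnecessary: once $0 < P_m(\zz_0) < \infty$ is established, uniqueness in Definition \ref{df-decay-character} already forces $r^{\ast}(\zz_0) = m$.
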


\begin{proof}  Let $\lambda = \min \{ r^{\ast} (\uu_0), r^{\ast} ({\bf w}_0) , r^{\ast} (\bb_0) \}$. In order to fix ideas, suppose $\lambda = r^{\ast} (\uu _0)$ and $ \lambda <  r^{\ast} ({\bf w}_0) , r^{\ast} (\bb_0)$. Note that 

\begin{displaymath}
P_{\lambda} (\zz _0) = P_{\lambda} (\uu _0) + P_{\lambda} ({\bf w}_0) + P_{\lambda} (\bb _0)
\end{displaymath}
and that $P_{\lambda} (\uu _0) > 0$. Now

\begin{equation*}
\begin{split}
P_{\lambda} ({\bf w}_0) & = \lim _{\rho \to 0} \rho ^{-( 2 \lambda + 3)} \int _{B(\rho)} \bigl |\widehat{{\bf w}_0} (\xi) \bigr|^2 \, d \xi \\ & = \lim _{\rho \to 0} \rho ^{- 2 \left(  \lambda +  \left( r^{\ast} ({\bf w}_0) - \lambda \right) \right)}  \rho ^{2 \left( r^{\ast} ({\bf w}_0) - \lambda \right)} \rho ^{-3} \int _{B(\rho)} \bigl |\widehat{{\bf w}_0} (\xi) \bigr|^2 \, d \xi \\ & = \lim _{\rho \to 0} \rho ^{2  \left( r^{\ast} ({\bf w}_0)  - \lambda \right)} \rho ^{-( 2 r^{\ast} ({\bf w}_0) + 3)} \int _{B(\rho)} \bigl |\widehat{{\bf w}_0} (\xi) \bigr|^2 \, d \xi \\ & = \lim _{\rho \to 0} \rho ^{2 \left( r^{\ast} ({\bf w}_0)  - \lambda \right)} r^{\ast} ({\bf w}_0) = 0
\end{split}
\end{equation*}
because $r^{\ast} ({\bf w}_0)  > \lambda$. The same argument proves that $P_{\lambda} (\bb_0) = 0$, hence $P_{\lambda} (\zz _0) = P_{\lambda} (\uu _0)$, which leads to the result.
\end{proof}

In order to use Theorem  \ref{characterization-decay-l2} we pass to frequency space, where after taking the Fourier transform of (\ref{eqn:linear}) we obtain

\begin{displaymath}
\partial_t \widehat{\bar{\zz}} = M(\xi) \widehat{\bar{\zz}},
\end{displaymath}
where $M = M(\xi)$ is the matrix of symbols given by

\begin{equation}
\label{eqn:matrix-symbols}
M = \left( \begin{array}{ccc} - (\mu + \chi) |\xi|^2 Id_{3 \times 3} & i \chi R_3 (\xi) & 0 _{3 \times 3}  \\  i \chi    R_3 (\xi) & - ( \gamma |\xi|^2 + 2 \chi)  Id_{3 \times 3} -  \xi_i \xi_j  & 0 _{3 \times 3}  \\ 0 _{3 \times 3} & 0 _{3 \times 3} & -  \nu |\xi|^2 Id_{3 \times 3}    \end{array} \right).
\end{equation}
Here $Id_{3 \times 3}$ and $0 _{3 \times 3}$ are the $3 \times 3$ identity and zero matrizes respectively and $ i  R_3 (\xi)$ is the rotation matrix

\begin{displaymath}
 i  R_3 (\xi) =  i  \left( \begin{array}{ccc} 0 & \xi_3 & - \xi_2 \\ - \xi_3 & 0 & \xi_1 \\ \xi_2 & - \xi_1 & 0 \end{array} \right).
\end{displaymath}As $M$ is self-adjoint, it is diagonalizable and $M(\xi) = P^{-1} (\xi) D(\xi) P(\xi)$, where $P \in U(n)$ and $D(\xi)$ is a diagonal matrix.  To use Theorem \ref{characterization-decay-l2} we would need to compute the eigenvalues of $M$, which is a cumbersome task. Instead, we will prove the following Lemma, which provides an estimate for the largest eigenvalue. This immediately leads to (\ref{eqn:key-inequality}) and allows us to use Theorem \ref{characterization-decay-l2}. 

\begin{Lemma} \label{lemma-eigenvalue}  Let $32 \chi (\mu + \chi + \gamma) > 1$. Then, for $M = M( \xi)$ we have that

\begin{displaymath}
\lambda _{max} (M) \leq - C |\xi|^2, \qquad C =C (\mu, \chi, \gamma, \nu) > 0.
\end{displaymath}

\end{Lemma}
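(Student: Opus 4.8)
The plan is to bound the largest eigenvalue of the self-adjoint matrix $M(\xi)$ by estimating the quadratic form $\langle M \eta, \eta \rangle$ from above for an arbitrary unit vector $\eta = (a, b, c) \in \CC^9$, where $a, b, c \in \CC^3$ correspond to the $\bar{\uu}, \bar{\ww}, \bar{\bb}$ blocks. Since $M$ is self-adjoint, $\lambda_{max}(M) = \sup_{|\eta| = 1} \langle M \eta, \eta \rangle$, so it suffices to produce a uniform upper bound of the form $-C|\xi|^2$ for this quadratic form. The magnetic block decouples completely and contributes $-\nu |\xi|^2 |c|^2$, which is already of the desired form, so the real work is confined to the coupled $\bar{\uu}$–$\bar{\ww}$ $6 \times 6$ subsystem.

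First I would write out the quadratic form for the coupled block explicitly. The diagonal contributions are $-(\mu+\chi)|\xi|^2 |a|^2$ from the velocity and $-(\gamma |\xi|^2 + 2\chi)|b|^2 - |\langle \xi, b\rangle|^2$ from the micro-rotation (the last term coming from the $-\xi_i \xi_j$ factor, which is negative semidefinite and can simply be discarded for an upper bound). The off-diagonal coupling through $i\chi R_3(\xi)$ contributes a cross term; using that $\langle M\eta, \eta\rangle$ is real and the Hermitian symmetry of the two off-diagonal blocks, this cross term has the form $2\chi \,\mathrm{Re}\,\langle i R_3(\xi) b, a\rangle$, which I would bound in absolute value using $|R_3(\xi) b| \leq \sqrt{2}\,|\xi|\,|b|$ (since $R_3(\xi)$ is essentially the cross-product operator $\xi \times \,\cdot\,$, whose operator norm is $|\xi|$, though the explicit entrywise bound gives $\sqrt{2}|\xi|$) together with Cauchy–Schwarz, yielding a bound like $2\sqrt{2}\,\chi\,|\xi|\,|a|\,|b|$.

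The heart of the argument is then a Young's inequality balancing: the dangerous cross term $2\sqrt{2}\chi|\xi||a||b|$ must be absorbed into the good negative terms $-(\mu+\chi)|\xi|^2|a|^2$ and $-2\chi|b|^2$. The natural split is to write $2\sqrt{2}\chi|\xi||a||b| \leq (\mu+\chi)|\xi|^2|a|^2 + \frac{2\chi^2}{\mu+\chi}|\xi|^2|b|^2$ (choosing the Young parameter to exactly kill the $|a|$-term up to a factor), or alternatively to distribute the cross term so that part is absorbed by the velocity dissipation and part by the damping $2\chi|b|^2$. The precise hypothesis $32\chi(\mu+\chi+\gamma) > 1$ should emerge exactly from requiring that after absorption the remaining coefficient of $|b|^2$ stays negative and bounded away from zero in a way compatible with extracting a factor of $|\xi|^2$; the constant $32$ strongly suggests a Young split with weights chosen so that the residual inequality becomes a clean threshold condition. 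I expect to need to treat the regime of small $|\xi|$ and large $|\xi|$ with some care, since the damping term $2\chi|b|^2$ has no $|\xi|^2$ factor: for large $|\xi|$ the dissipative terms dominate trivially, while for small $|\xi|$ one must use the damping together with the fact that the cross term is only linear in $|\xi|$ to retain a net $-C|\xi|^2$ bound.

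The main obstacle will be the bookkeeping in the Young inequality to make the constant come out exactly as the stated threshold $32\chi(\mu+\chi+\gamma)>1$, and in particular verifying that a single constant $C = C(\mu,\chi,\gamma,\nu) > 0$ works uniformly in $\xi$ rather than degenerating as $|\xi| \to 0$ or $|\xi| \to \infty$. My strategy to overcome this is to aim not for the sharp constant but first to establish the inequality $\langle M\eta,\eta\rangle \leq -C|\xi|^2 |\eta|^2$ with \emph{some} $C>0$ under the given hypothesis, by carefully choosing the Young parameter as a function of the viscosities so that the coefficient of $|b|^2|\xi|^2$ remains strictly negative; the factor involving $\gamma$ enters because one can also exploit the $-\gamma|\xi|^2|b|^2$ dissipation (not just the damping) to help absorb the cross term, which is presumably why $\gamma$ appears additively alongside $\mu+\chi$ in the hypothesis. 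Once the quadratic form bound is in hand, Lemma \ref{lemma-eigenvalue} follows immediately, and it feeds directly into the key inequality (\ref{eqn:key-inequality}) with $\alpha = 1$, $n = 3$, making Theorem \ref{characterization-decay-l2} applicable to the linear system (\ref{eqn:linear}).
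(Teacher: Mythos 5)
Your proposal is correct in substance, but it takes a genuinely different route from the paper's proof at the key step. The paper also starts from the Rayleigh--Ritz characterization and the same decomposition $M = M_1 + M_2 + M_3$ (diagonal dissipative part, the negative semidefinite $-\xi_i\xi_j$ block, which is discarded, and the rotational coupling), but it then treats the coupling by \emph{exact diagonalization}: it computes the spectrum $\{-|\xi|,0,|\xi|\}$ of $iR_3(\xi)$, builds from its eigenvectors an adapted orthonormal basis of $\CC^9$ consisting of normalized vectors of the form $(v_j,\pm v_j,0)$ together with the last three canonical vectors, and evaluates $v^{\ast}(M_1+M_3)v$ exactly in that basis; the dangerous modes are those on which the coupling contributes with a positive sign, and positivity of the resulting quadratic polynomial in $|\xi|$ is precisely where the threshold $32\chi(\mu+\chi+\gamma)>1$ enters their computation. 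You instead estimate the coupling by Cauchy--Schwarz, $|2\chi\,\mathrm{Re}\,\langle iR_3(\xi)b,a\rangle| \leq 2\chi|\xi|\,|a|\,|b|$, and absorb it by Young's inequality. One warning: of your two proposed splits, the first one (absorbing the full $(\mu+\chi)|\xi|^2|a|^2$) fails, since it leaves a vanishing coefficient on $|a|^2$ and hence no $-C|\xi|^2|a|^2$ term; it is your \emph{alternative} split that closes the argument, with the parameter chosen to feed the $|b|^2$ piece entirely to the damping: $2\chi|\xi|\,|a|\,|b| \leq \tfrac{\chi}{2}|\xi|^2|a|^2 + 2\chi|b|^2$, which gives $\langle M\eta,\eta\rangle \leq -\left(\mu+\tfrac{\chi}{2}\right)|\xi|^2|a|^2 - \gamma|\xi|^2|b|^2 - \nu|\xi|^2|c|^2$, i.e. $\lambda_{max}(M) \leq -\min\left\{\mu+\tfrac{\chi}{2},\gamma,\nu\right\}|\xi|^2$. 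Note also that, contrary to your expectation, the constant $32$ does not emerge this way: your argument requires no restriction on the parameters at all, so you in fact prove a stronger statement (harmless, since the lemma assumes the hypothesis anyway). As for what each approach buys: the paper's adapted-basis computation is exact and pinpoints which coupled modes are dangerous, while yours is shorter, avoids constructing eigenvectors of $R_3(\xi)$, automatically resolves the small-$|\xi|$ versus large-$|\xi|$ uniformity you worried about (Young splits the linear-in-$|\xi|$ cross term into a quadratic piece absorbed by the viscosity and a constant piece absorbed by the damping), and reveals that the parameter restriction is an artifact of the method rather than a necessity for this lemma.
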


\begin{proof} We follow the ideas in Section 3.1 in Ferreira and Villamizar-Roa \cite{MR2324348}. From the Rayleigh-Ritz Theorem we know that for any Hermitian matrix $M \in \mathcal{M} _n (\CC)$, the inequality 

\begin{displaymath}
R_{M} (v) = v^{\ast} M v \leq \lambda_{\max} (M)
\end{displaymath}
holds, for all $\Vert v \Vert_2 = 1$.

A simple computation shows that the matrix $i R_3 (\xi)$ has spectrum $\sigma (i R_3 (\xi)) = \{- |\xi|, 0, |\xi|  \} $, with associated orthonormal eigenvectors $v_1, v_2, v_3$. With these, we construct the orthonormal basis $\mathcal{B} = \left( b_1, \cdots, b_n \right)$ for $\CC ^9$, where

\begin{eqnarray*}
\mathcal{B}   = \left\{  \frac{1}{2} (v_1, v_1, 0), \frac{1}{2} (v_3, - v_3, 0), \frac{1}{2} (v_2, v_2, 0),
\frac{1}{2} (v_2, - v_2, 0),  \right. \\  \left.  \frac{1}{2} (v_3, v_3, 0), \frac{1}{2} (v_1, - v_1, 0), e_7, e_8, e_9 \right\}
\end{eqnarray*}
where $e_7, e_8, e_9$ are the last three vectors in the canonical base in $\CC ^9$.

We now write $M (\xi) = M_1 (\xi) + M_2 (\xi) + M_3 (\xi)$, where

\begin{displaymath}
M_1 (\xi)  = \left( \begin{array}{ccc} - (\mu + \chi) |\xi|^2 Id_{3 \times 3} & 0 _{3 \times 3} & 0 _{3 \times 3}  \\  0 _{3 \times 3} & - ( \gamma |\xi|^2 + 2 \chi)  Id_{3 \times 3}  & 0 _{3 \times 3}  \\ 0 _{3 \times 3} & 0 _{3 \times 3} & -  \nu |\xi|^2 Id_{3 \times 3}    \end{array} \right),
\end{displaymath}

\begin{displaymath}
M_2 (\xi) = \left( \begin{array}{ccc} 0 _{3 \times 3} & 0 _{3 \times 3}  & 0 _{3 \times 3}  \\ 0 _{3 \times 3} & - \xi_i \xi_j  & 0 _{3 \times 3}  \\ 0 _{3 \times 3} & 0 _{3 \times 3} & 0 _{3 \times 3}    \end{array} \right), \quad
M_3 (\xi) = \left( \begin{array}{ccc} 0 _{3 \times 3} & i \chi R_3 (\xi) & 0 _{3 \times 3}  \\  i \chi    R_3 (\xi) & 0 _{3 \times 3}  & 0 _{3 \times 3}  \\ 0 _{3 \times 3} & 0 _{3 \times 3} & 0 _{3 \times 3}    \end{array} \right).
\end{displaymath}

As the eigenvalues of $M_2 $ are $0$ and $- |\xi|$, we have that $v^{\ast} M_2 (\xi) v \leq 0$. Now take $v = \sum _{v = 1} ^9 c_i b_i$. This leads to

\begin{align}
v^{\ast} M_1 v & = - \frac{1}{2} \left( c_1 ^2 + c_2^2  + c_5 ^2 + c_6^2 \right) ((\mu + \chi)|\xi|^2 + \gamma|\xi|^2 + 2 \chi) \notag \\ & - \frac{1}{2} c_3 ^2 (\mu + \chi)|\xi|^2  - \frac{1}{2} c_4^2 \left( \gamma|\xi|^2 + 2 \chi \right)   - \left( c_7 ^2 + c_8 ^2 + c_9 ^2 \right) \nu |\xi|^2 \notag
\end{align}
and to 

\begin{displaymath}
v^{\ast} M_3 v = - \frac{1}{4} \left( c_1 ^2 + c_6 ^2 \right) |\xi| + \frac{1}{4} \left( c_2 ^2 + c_5 ^2 \right) |\xi|.
\end{displaymath}
Then

\begin{align}
v^{\ast} M_1 v & + v^{\ast} M_3 v  = - \left(c_1^2 + c_6^2 \right) \left( \frac{1}{2} (\mu + \chi + \gamma) |\xi|^2 + \frac{1}{4} |\xi| + \chi     \right) \notag \\ & -  \left(c_2^2 + c_5^2 \right) \left( \frac{1}{2} (\mu + \chi + \gamma) |\xi|^2 - \frac{1}{4} |\xi| + \chi     \right) \notag \\ & - \frac{1}{2} c_3 ^2 (\mu + \chi)|\xi|^2  - \frac{1}{2} c_4^2 \left( \gamma|\xi|^2 + 2 \chi \right)   - \left( c_7 ^2 + c_8 ^2 + c_9 ^2 \right) \nu |\xi|^2  \notag \\ & \leq - \Vert v \Vert _2 ^2 \min \{ (\mu + \chi + \gamma) |\xi|^2 - \frac{1}{2} |\xi| + 2 \chi, (\mu + \chi)|\xi|^2, \gamma|\xi|^2 + 2 \chi, 2 \nu |\xi|^2 \notag  \}.
\end{align}
If $32 \chi (\mu + \chi + \gamma) > 1$, then $(\mu + \chi + \gamma) |\xi|^2 - \frac{1}{2} |\xi| + 2 \chi > 0$ for any $|\xi|$ and the result follows.
\end{proof}

The estimate obtained in Lemma \ref{lemma-eigenvalue} leads to (\ref{eqn:key-inequality}). We can now use Theorem  \ref{characterization-decay-l2} to obtain

\begin{Theorem} \label{linear-part-magnetomicropolar}
Let $\bar{\zz}_0 \in L^2 (\RR^9)$ have decay character $r^{\ast} (\bar{\zz} _0) = r^{\ast}$. Then

\begin{enumerate}
\item if $- \frac{n}{2 } < r^{\ast}< \infty$, there exist constants $C_1, C_2> 0$ such that
\begin{displaymath}
C_1 (1 + t)^{- \left( \frac{3}{2} + r^{\ast} \right)} \leq \Vert \bar{\zz}(t) \Vert _{L^2} ^2 \leq C_2 (1 + t)^{-  \left( \frac{3}{2} + r^{\ast} \right)};
\end{displaymath}
\item if $ r^{\ast}= - \frac{3}{2}$, there exists $C = C(\epsilon) > 0$ such that
\begin{displaymath}
\Vert \bar{\zz} (t) \Vert _{L^2} ^2 \geq C (1 + t)^{-\epsilon}, \qquad \forall \epsilon > 0,
\end{displaymath}
i.e. the decay of $\Vert v(t) \Vert _{L^2} ^2$ is slower than any uniform  algebraic rate;
\item if $r^{\ast} = \infty$, there exists $C > 0$ such that
\begin{displaymath}
\Vert \bar{\zz} (t) \Vert _{L^2} ^2 \leq C (1 + t) ^{- m}, \qquad \forall m > 0,
\end{displaymath}
i.e. the decay of $\Vert \bar{\zz} (t) \Vert _{L^2}$ is faster than any algebraic rate.
\end{enumerate} 
\end{Theorem}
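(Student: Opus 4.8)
The plan is to deduce the statement directly from Theorem \ref{characterization-decay-l2} once its hypotheses are secured for the linear evolution (\ref{eqn:linear}). Passing to Fourier variables, $\widehat{\bar{\zz}}(\xi,t) = e^{tM(\xi)}\widehat{\bar{\zz}_0}(\xi)$ with $M(\xi)$ the Hermitian symbol in (\ref{eqn:matrix-symbols}); here the spatial dimension is $n=3$ and the dissipation order is $\a = 1$, so the exponent $\frac{1}{\a}(\frac{n}{2}+r^{\ast})$ of Theorem \ref{characterization-decay-l2} becomes exactly $\frac{3}{2}+r^{\ast}$. The content of Lemma \ref{lemma-eigenvalue}, namely $\lambda_{max}(M(\xi)) \le -C|\xi|^2$, together with the self-adjointness of $M$, provides the two ingredients I need: the contraction estimate $|\widehat{\bar{\zz}}(\xi,t)| \le e^{-Ct|\xi|^2}|\widehat{\bar{\zz}_0}(\xi)| \le |\widehat{\bar{\zz}_0}(\xi)|$, and the energy inequality (\ref{eqn:key-inequality}) with $\a=1$.

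For the upper bound and for case (3), I would run the Fourier Splitting Method exactly as in the proof of Theorem \ref{characterization-decay-l2}. Starting from $\frac{d}{dt}\|\widehat{\bar{\zz}}(t)\|_{L^2}^2 \le -2C\int_{\RR^3}|\xi|^2|\widehat{\bar{\zz}}|^2\,d\xi$, I split the frequency domain at radius $g(t)$ with $g(t)^2 \sim \frac{k}{1+t}$, using $\int_{|\xi|\ge g(t)}|\xi|^2|\widehat{\bar{\zz}}|^2 \ge g(t)^2(\|\widehat{\bar{\zz}}\|^2 - \int_{B(g(t))}|\widehat{\bar{\zz}}|^2)$. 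The contraction estimate gives $\int_{B(g(t))}|\widehat{\bar{\zz}}(t)|^2 \le \int_{B(g(t))}|\widehat{\bar{\zz}_0}|^2$, and the definition of the decay indicator forces this last quantity to be asymptotic to $P_{r^{\ast}}(\bar{\zz}_0)\,g(t)^{2r^{\ast}+3}$ as $t\to\infty$. Choosing $k$ suitably and integrating the resulting differential inequality yields $\|\bar{\zz}(t)\|_{L^2}^2 \le C(1+t)^{-(\frac{3}{2}+r^{\ast})}$ when $r^{\ast}<\infty$; when $r^{\ast}=\infty$ the ball integral is $o(g(t)^{2r+3})$ for every $r$, producing decay faster than any algebraic rate, which is case (3).

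The lower bound (and case (2)) is the delicate point, because the full symbol $M(\xi)$ does \emph{not} satisfy the hypothesis $D(\xi) = -c_i|\xi|^{2\a}\delta_{ij}$ of Theorem \ref{characterization-decay-l2}: as $\xi \to 0$ the three eigenvalues of the micro-rotational block tend to $-2\chi$ rather than to $0$, so those modes are damped at an essentially exponential rate and contribute nothing to the slow decay. Hence the lower-bound half of Theorem \ref{characterization-decay-l2} cannot be quoted for $M$ as a whole. My plan is to recover the lower bound from the genuinely diffusive modes only. The magnetic block decouples and $\bar{\bb}$ solves $\partial_t\bar{\bb} = \nu\Delta\bar{\bb}$, whose symbol $-\nu|\xi|^2 Id$ does satisfy the hypothesis of Theorem \ref{characterization-decay-l2} with $\a=1$, giving $\|\bar{\bb}(t)\|_{L^2}^2 \ge C(1+t)^{-(\frac{3}{2}+r^{\ast}(\bb_0))}$. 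For the coupled $(\bar{\uu},\bar{\ww})$ block I would use perturbation theory near $\xi=0$: three eigenvalues behave like $\sim|\xi|^2$ with eigenprojections converging to the $\bar{\uu}$-subspace, so projecting $\widehat{\bar{\zz}_0}$ onto these slow modes and bounding the corresponding low-frequency integral from below by the decay indicator yields $\|\bar{\zz}(t)\|_{L^2}^2 \ge C(1+t)^{-(\frac{3}{2}+r^{\ast})}$; the same estimate, when $P_r(\bar{\zz}_0)=\infty$ for all $r$, gives the sub-algebraic lower bound of case (2).

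The main obstacle is precisely this last step: isolating the diffusive slow modes (the magnetic block and the velocity part of the coupled block, whose eigenvalues vanish like $|\xi|^2$) from the damped micro-rotational modes, and checking that the decay character $r^{\ast}(\bar{\zz}_0)$ is realized on the slow modes so that the two-sided rate $\frac{3}{2}+r^{\ast}$ is genuinely sharp. This is exactly the spectral information that the crude estimate of Lemma \ref{lemma-eigenvalue} allows one to sidestep for the upper bound, but which must be confronted directly through the low-frequency structure of (\ref{eqn:matrix-symbols}) to obtain the matching lower bound.
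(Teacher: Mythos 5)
Your treatment of the upper bound and of case (3) coincides with the paper's: the paper's entire proof is the invocation of Theorem \ref{characterization-decay-l2} on the strength of Lemma \ref{lemma-eigenvalue}, and for the \emph{upper} bounds that is legitimate, since $\lambda_{max}(M(\xi)) \le -C|\xi|^2$ gives the pointwise contraction $|\widehat{\bar{\zz}}(\xi,t)| \le e^{-Ct|\xi|^2}|\widehat{\bar{\zz}}_0(\xi)|$ and hence the key inequality (\ref{eqn:key-inequality}) with $\a=1$, which is all the Fourier-splitting half of that theorem uses. Your diagnosis of the lower bound is also correct, and it identifies a genuine defect in the paper's own argument: Theorem \ref{characterization-decay-l2} requires the diagonalization $D(\xi) = -c_i|\xi|^{2\a}\delta_{ij}$, i.e.\ \emph{every} eigenvalue comparable to $-|\xi|^{2\a}$, whereas three eigenvalues of the symbol (\ref{eqn:matrix-symbols}) tend to $-2\chi \neq 0$ as $\xi \to 0$. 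A one-sided bound on $\lambda_{max}$ says nothing about how fast the semigroup can annihilate low frequencies, so the lower-bound half of Theorem \ref{characterization-decay-l2} is quoted by the paper outside its range of validity.

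However, the step you leave open --- ``checking that the decay character of $\bar{\zz}_0$ is realized on the slow modes'' --- is not a technicality that sharper perturbation theory will settle; it is exactly where the claim breaks down, and no argument can close it in the stated generality. Take $\bar{\zz}_0 = (0, \ww_0, 0)$ with $\widehat{\ww}_0(\xi) = |\xi|^{r^{\ast}}$ for $|\xi| \le 1$ and $0$ otherwise, so that $r^{\ast}(\bar{\zz}_0) = r^{\ast}(\ww_0) = r^{\ast}$. The perturbation computation you propose gives slow eigenvectors of the form $\bigl(v, \tfrac{i}{2}R_3(\xi)v, 0\bigr) + O(|\xi|^2)$ together with the exact magnetic modes $(0,0,b)$; hence the slow-mode component of $\widehat{\bar{\zz}}_0$ has size $O(|\xi|)\,|\widehat{\ww}_0(\xi)|$, while the fast-mode component is damped like $e^{-\chi t}$ uniformly on a small ball and the high frequencies decay exponentially by Lemma \ref{lemma-eigenvalue}. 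Consequently $\Vert \bar{\zz}(t) \Vert_{L^2}^2 \le C(1+t)^{-(\frac{5}{2}+r^{\ast})} + Ce^{-ct}$, which for large $t$ lies strictly below the asserted lower bound $C_1(1+t)^{-(\frac{3}{2}+r^{\ast})}$; a logarithmic variant of the same data ($r^{\ast}(\ww_0) = -\tfrac{3}{2}$ in the sense of Definition \ref{df-decay-character}) defeats case (2) as well, since the solution then decays at an algebraic rate. So the lower bounds in parts (1) and (2) are simply false for general $\bar{\zz}_0$: they can hold only under an additional hypothesis, e.g.\ $\min\{r^{\ast}(\uu_0), r^{\ast}(\bb_0)\} = r^{\ast}(\bar{\zz}_0)$, i.e.\ the decay character being attained on the undamped components, in which case your plan (lower bound from the decoupled magnetic block and the perturbed velocity block) does go through. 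In short: your proposal correctly locates the gap, but the missing step cannot be filled because the statement's lower bound, as written, is not true --- a defect your blind attempt detected and the paper's proof silently passes over.
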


As the symbol matrix (\ref{eqn:matrix-symbols}) is diagonalizable, the linear system (\ref{eqn:linear}) decouples. The fact that  the decay character of $\zz _0$  is the mininum of the decay characters of $\uu _0, \ww _0, \bb _0$ , implies that decay of solutions $\bar{\zz} (t)$  to (\ref{eqn:linear}) is the slowest of the decays of  $\bar{\uu} (t), \bar{\ww} (t), \bar{\bb} (t)$.

\subsection{The work of Brandolese \cite{MR3493117} and estimate (\ref{eqn:besov})} \label{brandolese} The decay character of initial data $v_0 \in L^2 (\RR ^n)$ is used to prove sharp upper and lower bounds for decay of ``diagonalizable'' linear systems, see Theorem \ref{characterization-decay-l2} and its application to (\ref{eqn:linear}) in Theorem \ref{linear-part-magnetomicropolar}. In Definitions \ref{decay-indicator} and \ref{df-decay-character}, introduced by Bjorland and M.E. Schonbek \cite{MR2493562} and extended by Niche and M.E. Schonbek \cite{MR3355116},  the existence of a limit and a positive $P_r (u_0)$ are assumed. However this need not be the case for all of $v_0 \in L^2 (\RR ^n)$. Brandolese \cite{MR3493117} constructed initial data in $L^2$, highly oscillating near the origin, for which the limit in Definition \ref{decay-indicator} does not exist for some $r$. As a result of this, the decay character does not exist. Then, Brandolese gave a slightly different definition of decay character, more general than that in  Definitions \ref{decay-indicator} and \ref{df-decay-character}, but which produces the same result when these hold. 

In this same article, Brandolese proved that the decay character $r ^{\ast}$ (in his more general version) exists for $v_0 \in L^2 (\RR ^n)$ if and only if $v_0$ belongs to a certain subset $\dot{\mathcal{A}} ^{- \left(\frac{n}{2} + r^{\ast} \right)} _{2, \infty} \subset \dot{B} ^{- \left(\frac{n}{2} + r^{\ast} \right)} _{2, \infty}$. Moreover, for diagonalizable linear operators $\mathcal{L}$ as in (\ref{eqn:symbol}), solutions to the linear system (\ref{eqn:linear-part}) with initial data $v_0$ obey

\begin{displaymath}
C_1 (1 + t)^{- \frac{1}{\a} \left( \frac{n}{2} + r^{\ast} \right)} \leq \Vert v(t) \Vert _{L^2} ^2 \leq C_2 (1 + t)^{- \frac{1}{\a} \left( \frac{n}{2} + r^{\ast} \right)},
\end{displaymath}
if and only if the decay character $r^{\ast} = r^{\ast} (v_0)$ exists. This provides a sharp characterization of algebraic decay rates for such systems and provides a key tool for studying decay for nonlinear systems.

Now, let us recall estimate (\ref{eqn:besov}), proved by  Tan, Wu and Zhou \cite{MR3912713}. By taking $s = - \left( \frac{3}{2} + r^{\ast} \right)$, where $-\frac{3}{2} \leq r^{\ast} < 0$, their estimate reads 

\begin{equation}
\label{eqn:specific}
\Vert \zz (t) \Vert _{L^2} ^2 \leq C (1 + t) ^{- \left( \frac{3}{2} + r^{\ast} \right)},
\end{equation}
for $\zz_0 \in H^N (\RR ^3) \cap \dot{B} ^{- \left( \frac{3}{2} + r^{\ast} \right)} _{2, \infty} (\RR^3)$ with small $H^N$ norm, for some $N \geq 3$. As a result of Brandolese's results discussed above, existence of (our version of) the decay character implies that using Theorem \ref{decay-z} we obtain (\ref{eqn:specific}) without the necessity of assuming $\zz_0 \in H^N (\RR^3)$. Moreover, our result shows that (\ref{eqn:specific}) also holds for $\frac{3}{2} \leq s \leq \frac{5}{2}$.

\section{Proofs} 
\label{proofs}

\subsection{Proof of Theorem \ref{decay-z}}

\begin{proof} As is usual when using the Fourier Splitting  we  prove decay for regular enough solutions to an approximate nonlinear problem obtained through spectral cutoff, as in Li and Shang \cite{MR3825173} for the magnetomicropolar equations or through retarded mollifiers (see Cafarelli, Kohn and Nirenberg \cite{MR673830}), as in the case of the micropolar fluid equations, see Braz e Silva, Cruz, Freitas e Zingano \cite{MR3955606}. The decay for weak solutions is obtained through a standard limiting process, for full details see Braz e Silva, Cruz, Freitas e Zingano \cite{MR3955606} for the micropolar fluid equations and pages 267--269 in Lemari\'e-Rieusset \cite{MR1938147} and the Appendix in Wiegner \cite{MR881519} for the Navier-Stokes equations case.

We now proceed formally. As we have seen before

\begin{displaymath}
\partial_t \Vert \zz (t) \Vert ^2 _{L^2} \leq - C \Vert \nabla \zz (t) \Vert ^2 _{L^2}.
\end{displaymath}
Let  $B(t) = \{\xi \in \RR^3: |\xi| \leq g(t) \}$, for a nonincreasing, continuous $g$ with $g(0) = 1$. Then 

\begin{align}
- C \Vert \nabla \zz (t) \Vert ^2 _{L^2} & =  - C \int _{B(t)} |\xi|^2 |\widehat{\zz} (\xi, t)|^2 \, d \xi - C \int _{B(t) ^{c}} |\xi|^2 |\widehat{\zz} (\xi, t)|^2 \, d \xi \notag \\ & \leq  - C \int _{B(t) ^{c}} |\xi|^2 |\widehat{\zz} (\xi, t)|^2 \, d \xi \leq - C g^2 (t)  \int _{B(t) ^{c}}|\widehat{\zz} (\xi, t)|^2 \, d \xi \notag
\end{align}
which leads to

\begin{align}
\label{eqn:main-estimate-fs}
\frac{d}{dt} \left( \exp \left( \int _0 ^t C g^2 (s) \, ds \right)  \Vert \zz (t) \Vert _{L^2} ^2 \right) \leq \;\;\;\;\;\;\;\;\;\;\;\;\;\;\;\; \\ g^2 (t) \left( \exp \left( \int _0 ^t C g^2 (s) \, ds \right) \right) \int _{B(t)} |\widehat{\zz} (\xi, t)| ^2 \, d \xi. \notag
\end{align}
We now need a pointwise estimate for

\begin{displaymath}
\widehat{\zz} (\xi, t) = e^{t M (\xi)} \widehat{\zz _0} (\xi)- \int _0 ^t e^{(t - s) M (\xi)} G(\xi, s) \, ds
\end{displaymath}
where $M$ is as in (\ref{eqn:matrix-symbols}) and
\begin{displaymath}
G(\xi,s) = \mathcal {F} \left( NL (\uu, \ww, \bb) \right) (\xi, s)
\end{displaymath}
for 

\begin{equation}
\label{eqn:nonlinear}
NL (\uu, \ww, \bb) = \left( (\bb \cdot \nabla) \bb - (\uu \cdot \nabla) \uu - \nabla p, - (\uu \cdot \nabla) \ww, - (\uu \cdot \nabla) \bb + (\bb \cdot \nabla) \uu  \right).
\end{equation}
For $F = \uu, \bb$ and $G = \uu, \ww, \bb$, we have that

\begin{displaymath}
\mathcal{F} \left( (F \cdot \nabla) G \right) = \mathcal{F} \left( \nabla (F \otimes G) \right) = i \xi \cdot \left( \widehat{F \otimes G} \right),
\end{displaymath}
so

\begin{equation}
\label{eqn:tensor}
|\mathcal{F} \left( (F \cdot \nabla) G \right) (\xi)| \leq |\xi| \Vert F \Vert _{L^2} \Vert G \Vert _{L^2}.
\end{equation}
By taking divergence in the first equation in (\ref{eqn:MHDmicropolar}) we obtain

\begin{displaymath}
\Delta p = div \, (\bb \cdot \nabla) \bb - div \, (\uu \cdot \nabla) \uu
\end{displaymath}
from which we get

\begin{equation}
\label{eqn:pressure}
- |\xi|^2 \widehat{p} (\xi) = i \sum _{j,k} \xi_j \xi_k \widehat{\bb_j \bb_k} + i \sum _{j,k} \xi_j \xi_k \widehat{\uu_j \uu_k} \leq |\xi| ^2 \left(\Vert \bb(t) \Vert ^2 _{L^2} +  \Vert \uu(t) \Vert ^2 _{L^2} \right).
\end{equation}
Then, from (\ref{eqn:nonlinear}), (\ref{eqn:tensor}) and (\ref{eqn:pressure}) we obtain

\begin{align}
\label{eqn:nonlinear_final}
|G(\xi,t)|  \leq C |\xi| \Vert \zz (t) \Vert ^2 _{L^2}.
\end{align}
Thus,
\begin{eqnarray*}
\left| \int _0 ^t e^{(t -s) M (\xi)} G(\xi,s) \, ds \right| & \leq & C \int _0 ^t e^{- C (t - s) |\xi|^2} \, |\xi| \Vert \zz(s) \Vert _{L^2} ^2 \, ds \nonumber \\ & \leq & C |\xi| \left( \int _0 ^t \Vert \zz(s) \Vert _{L^2} ^2 \, ds \right),
\end{eqnarray*}
where we used the estimate in Lemma \ref{lemma-eigenvalue}.

Suppose now that $\Vert \zz(t) \Vert _{L^2} ^2 \leq C (1 + t) ^{- \alpha}$, for  some $0 \leq \alpha$. As a result of this

\begin{displaymath}
\int _{B(t)} \left( \int _0 ^t e^{(t -s) M (\xi)} G(\xi,s) \, ds \right) ^2 d \xi \leq C |\xi|^5 (1 + t) ^{2 (1 - \alpha)},
\end{displaymath}
which leads, after choosing $g^2 (t) = A (1 +t) ^{-1}$ and for large enough $A > 0$, to

\begin{eqnarray}
\label{eqn:int-ball}
\int _{B(t)} |\widehat{\zz} (\xi, t)|^2 \, d \xi & \leq & C \int _{B(t)} |e^{t M (\xi)} \widehat{\zz _0}|^2 \, d \xi + C \int _{B(t)} \left( \int_0 ^t e^{(t -s) M (\xi)} G(\xi,s) \, ds \right) ^2 \, d \xi \nonumber \\ & \leq & C \Vert  e^{t M (\xi)} \widehat{\zz _0} \Vert _{L^2} ^2 + C g^5 (t) (1 + t) ^{2 (1 - \alpha)} \nonumber \\ & \leq & C (t + 1) ^{- \left( \frac{3}{2} + r^{\ast} \right)} + C (1 + t) ^{- \left( \frac{1}{2} +2 \alpha \right)} \nonumber \\ & \leq & C (t + 1) ^{- \min\{\frac{1}{2}+2 \alpha,\frac{3}{2} + r^{\ast}\}},
\end{eqnarray}
where $r^{\ast} = r^{\ast} (\zz_0)$ and we used Theorem \ref{linear-part-magnetomicropolar} for the decay of the linear part. 
From (\ref{eqn:main-estimate-fs}), (\ref{eqn:int-ball}) and our choice of $g$ we obtain

\begin{equation}
\label{eqn:estimate-cases}
\frac{d}{dt} \left( (t + 1) ^{A}  \Vert \zz (t) \Vert _{L^2} ^2 \right) \leq C (t + 1) ^{A - 1} (t + 1) ^{- \min\{\frac{1}{2}+2\alpha,\frac{3}{2} + r^{\ast}\}}.
\end{equation}
We start with $\alpha=0$, this is the apriori estimate  $\Vert \zz(t) \Vert _{L^2} ^2 \leq C$. In $(\ref{eqn:estimate-cases})$ we consider the two cases  $\frac{3}{2} +r^{\ast} \leq   \frac{1}{2}$ and $  \frac{1}{2}\leq \frac{3}{2} +r^{\ast}$. In the first case, i.e. when $r^{\ast} \leq -1$, we have

\begin{equation}
\label{eqn:decay-as-linear}
\Vert \zz (t) \Vert _{L^2} ^2 \leq C  (t + 1) ^{- \left( \frac{3}{2} + r^{\ast} \right)}.
\end{equation}
In the second case, we obtain 

\begin{displaymath}
\Vert \zz (t) \Vert _{L^2} ^2 \leq C  (t + 1) ^{- \frac{1}{2}},
\end{displaymath}
which is the slower decay. Hence we improved our rate to an exponent at least as large as $\alpha = \frac{1}{2}$. We use this estimate to bootstrap in $(\ref{eqn:estimate-cases})$ and we see we have to separate again the study in two cases, namely $ \frac{3}{2} + r^{\ast} \leq \frac{3}{2}$ and $\frac{3}{2} \leq  \frac{3}{2} + r^{\ast}$. In the first case, which corresponds to  $r^{\ast} \leq 0$, we obtain (\ref{eqn:decay-as-linear}) again. In the second situation, i.e. when $r^{\ast} \geq 0$, we have improved to $\alpha = \frac{3}{2}$. But then

\begin{displaymath}
\int _0 ^t \Vert \zz (s) \Vert _{L^2} ^2 \, ds \leq C,
\end{displaymath}
so 

\begin{displaymath}
\int _{B(t)} \left( \int_0 ^t e^{(t -s) \mathcal{M} (\xi)} G(\xi,s) \, ds \right) ^2 \, d \xi \leq C g^5 (t).
\end{displaymath}
Then (\ref{eqn:int-ball}) becomes

\begin{eqnarray*}
\int _{B(t)} |\widehat{\zz} (\xi, t)|^2 \, d \xi & \leq & C \int _{B(t)} |e^{t M (\xi)} \widehat{\zz _0}|^2 \, d \xi + C \int _{B(t)} \left( \int_0 ^t e^{(t -s) M (\xi)} G(\xi,s) \, ds \right) ^2 \, d \xi \\ & \leq & C \Vert  e^{t M (\xi)} \widehat{\zz _0} \Vert _{L^2} ^2 + C g^5 (t)  \\ & \leq & C (t + 1) ^{- \left( \frac{3}{2} + r^{\ast} \right)} + C (1 + t) ^{- \frac{5}{2}} \leq C (t + 1) ^{- \min \{\frac{3}{2} + r^{\ast}, \frac{5}{2} \} }.
\end{eqnarray*}
Using this in (\ref{eqn:main-estimate-fs}) yields the upper bound for decay.

The reverse triangle inequality leads to

\begin{displaymath}
\Vert \zz (t) \Vert _{L^2} \geq \Vert \bar{\zz} (t) \Vert _{L^2} - \Vert \zz(t) - \bar{\zz} (t) \Vert _{L^2},
\end{displaymath}
where $\bar{\zz} (t)$ is the solution to the linear part of (\ref{eqn:MHDmicropolar}), i.e. system (\ref{eqn:linear}). By Theorem \ref{decay-difference}, we only have upper bounds for the decay of $\Vert \zz(t) - \bar{\zz} (t) \Vert _{L^2}$, so the upper bound we have just proved and Theorem \ref{decay-difference} lead to the lower bound only when the decay of linear part is slower than that of the difference.
\end{proof}

\subsection{Proof of Theorem \ref{decay-w}} Our proof follows that of Theorem 3.2 in Braz, Cruz, Freitas and Zingano \cite{MR3955606}, where an analogous result is proved for the micropolar system for $\zz_0 \in L^1 (\RR^3) \cap L^2 (\RR^3)$.

In the proof of the following Lemmas and Theorems we will need the standard heat kernel estimate in $\RR^3$

\begin{equation}
\label{eqn:heat_estimate1}
\Vert \nabla^{m} e^{t \Delta} f \Vert_{L^q} \leq K \Vert f \Vert_{L^r} \, t^{-\frac{3}{2}\left(\frac{1}{r} - \frac{1}{q} \right) - \frac{m}{2}}, \quad \,\,\forall \; t > 0,   
 \end{equation} 
for $1\leq r \leq q \leq \infty$ (see Kato \cite{Kato1984}). We will also need the following gradient estimate.

 \begin{Lemma} \label{gradient-z-decay} Let $\zz$ be a weak solution to (\ref{eqn:MHDmicropolar}), with $32 \, \chi (\mu + \chi + \gamma) > 1, \nu > 0$. Let $r^{\ast} (\zz_0) = r^{\ast}$ be the decay character of $\zz _0$, with $- \frac{3}{2} < r^{\ast} < \infty$. Then, 

\begin{displaymath}
\Vert \nabla \zz (t) \Vert _{L^2} ^2 \leq C (1 + t) ^{- \min \{ \frac{5}{2} + r^{\ast}, \frac{7}{2}\}}, \qquad \forall t \geq t_0,
\end{displaymath}
{for an appropriate, large enough $t_0 = t_0 (\Vert \zz_0 \Vert _{L^2})$.}
\end{Lemma}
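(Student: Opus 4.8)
The plan is to run a $\dot H^1$ energy estimate in which the dissipation produced by the linear part dominates the nonlinear interaction, and then to convert the resulting differential inequality into the stated algebraic rate by a Riccati-type argument fed with the bound already proved in Theorem \ref{decay-z}. Since $\Vert \zz(t)\Vert_{L^2}\to 0$, the solution becomes strong with small $\dot H^1$ norm after a finite time; this is the role of the threshold $t_0=t_0(\Vert\zz_0\Vert_{L^2})$, and all the computations below are performed on $[t_0,\infty)$, where they are fully justified (the passage to weak solutions being the same limiting procedure invoked in the proof of Theorem \ref{decay-z}).

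First I would derive the basic $\dot H^1$ inequality. Taking the Fourier transform of (\ref{eqn:MHDmicropolar}), pairing with $|\xi|^2\overline{\widehat{\zz}}$ and integrating, the quadratic form coming from the symbol $M(\xi)$ is controlled by Lemma \ref{lemma-eigenvalue} through Rayleigh--Ritz, so that $\mathrm{Re}\,\widehat{\zz}^{\,\ast}M\widehat{\zz}\le -C|\xi|^2|\widehat{\zz}|^2$; this yields
\[
\frac{d}{dt}\Vert \nabla \zz\Vert_{L^2}^2 + 2C\Vert \Delta\zz\Vert_{L^2}^2 \le 2\left| \int_{\RR^3}|\xi|^2\,\overline{\widehat{\zz}}\cdot \widehat{NL}\,d\xi\right|,
\]
where $NL$ is as in (\ref{eqn:nonlinear}). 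Equivalently the right-hand side equals $-2\int_{\RR^3} \Delta\zz\cdot NL\,dx$, and the pressure gradient drops out because $\nabla\cdot\uu=\nabla\cdot\bb=0$. The essential point here is that the eigenvalue bound of Lemma \ref{lemma-eigenvalue} supplies the full dissipation $\Vert\Delta\zz\Vert_{L^2}^2$ even though the equations for $\uu$ and $\ww$ are linearly coupled; no delicate cancellation in physical space is needed.

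Next I would estimate the nonlinearity. Each term of $NL$ is the divergence of a quadratic expression in $\zz$, so using $\dot H^1(\RR^3)\hookrightarrow L^6$ together with the Gagliardo--Nirenberg inequality one gets $\Vert NL\Vert_{L^2}\le C\Vert\nabla\zz\Vert_{L^2}^{3/2}\Vert\Delta\zz\Vert_{L^2}^{1/2}$; Cauchy--Schwarz and Young's inequality then absorb half of the dissipation and leave
\[
\frac{d}{dt}\Vert\nabla\zz\Vert_{L^2}^2 + C\Vert\Delta\zz\Vert_{L^2}^2 \le C\Vert\nabla\zz\Vert_{L^2}^6 .
\]
To turn this into decay I would interpolate, $\Vert\nabla\zz\Vert_{L^2}^2\le \Vert\zz\Vert_{L^2}\Vert\Delta\zz\Vert_{L^2}$, hence $\Vert\Delta\zz\Vert_{L^2}^2\ge \Vert\nabla\zz\Vert_{L^2}^4/\Vert\zz\Vert_{L^2}^2$, and insert the bound $\Vert\zz\Vert_{L^2}^2\le C(1+t)^{-\beta}$ with $\beta=\min\{\tfrac32+r^{\ast},\tfrac52\}$ from Theorem \ref{decay-z}. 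Writing $y=\Vert\nabla\zz\Vert_{L^2}^2$ this gives $y' + c\,(1+t)^{\beta}y^2\le C y^3$.

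The main obstacle is precisely the supercritical cubic term $y^3$: it cannot be dominated by the dissipation uniformly in time, and this is what forces the use of $t_0$. Since $y(t)$ is small for $t\ge t_0$, the cubic is absorbed, $C y^3\le \tfrac12 c(1+t)^{\beta}y^2$, leaving $y'\le -\tfrac{c}{2}(1+t)^{\beta}y^2$; dividing by $y^2$, integrating from $t_0$ and using $\beta>0$ (which holds because $r^{\ast}>-\tfrac32$) yields $y(t)\le C(1+t)^{-(1+\beta)}=C(1+t)^{-\min\{\frac52+r^{\ast},\frac72\}}$, which is the claim. I expect the genuine work to be concentrated in the nonlinear estimate of the previous paragraph and in justifying that the smallness needed to absorb $y^3$ is indeed available from time $t_0$ onwards.
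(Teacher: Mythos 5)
Your proof is correct, but it takes a genuinely different route from the paper's. The paper runs time-weighted energy estimates in the Fourier-splitting tradition: it first multiplies the system by $(1+t)^{\alpha(r^\ast)+\delta}\zz$ to obtain $\int_{t_0}^t(1+s)^{\alpha(r^\ast)+\delta}\|\nabla\zz(s)\|_{L^2}^2\,ds\le C(1+t)^{\delta}$, then differentiates the equations, tests with $(1+t)^{\alpha(r^\ast)+\delta+1}D_k\zz$, bounds the nonlinearity by $\|\zz\|_{L^\infty}\|\nabla\zz\|_{L^2}\|D^2\zz\|_{L^2}\le\|\zz\|_{L^2}^{1/2}\|\nabla\zz\|_{L^2}^{1/2}\|D^2\zz\|_{L^2}^{2}$ (Kreiss--Hagstrom--Lorenz--Zingano), absorbs it by the smallness of $\|\nabla\zz(t)\|_{L^2}$ for $t>t_0$, and reads the rate off from $(1+t)^{\alpha(r^\ast)+\delta+1}\|\nabla\zz(t)\|_{L^2}^2\le C(1+t)^{\delta}$. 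You instead derive a pointwise Riccati inequality for $y=\|\nabla\zz\|_{L^2}^2$: the $L^2$ decay of Theorem \ref{decay-z} enters through the interpolation $\|\nabla\zz\|_{L^2}^2\le\|\zz\|_{L^2}\|\Delta\zz\|_{L^2}$, which upgrades the dissipation to the time-weighted term $c(1+t)^{\beta}y^2$ with $\beta=\min\{\tfrac32+r^\ast,\tfrac52\}$, and integrating $y'\le-\tfrac{c}{2}(1+t)^{\beta}y^2$ yields the exponent $1+\beta=\min\{\tfrac52+r^\ast,\tfrac72\}$ in one stroke. The two arguments share their external inputs: Theorem \ref{decay-z}, the dissipation coming from Lemma \ref{lemma-eigenvalue} (which you correctly deploy at the $\dot H^1$ level via Rayleigh--Ritz, so the $\uu$--$\ww$ coupling and the pressure cause no trouble), and the eventual smallness of $\|\nabla\zz\|_{L^2}$ at some $t_0(\|\zz_0\|_{L^2})$ needed to absorb the supercritical term; note also that in your scheme this smallness self-propagates, since the inequality makes $y$ nonincreasing once it is small. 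What your route buys is brevity and self-containedness: no auxiliary weighted integral estimate, no bookkeeping with $\delta$, and the rate appears directly from the ODE. What the paper's route buys is reusability: the weighted dissipation integrals such as \eqref{2nd_order_inequality_done} are precisely what Lemma \ref{lemma-estimate-w-and-z} recycles to reach second and third derivatives, whereas your argument would have to be iterated (using $\|D^2\zz\|_{L^2}^2\le\|\nabla\zz\|_{L^2}\|D^3\zz\|_{L^2}$, which does work but must be redone at each order).
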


\begin{proof} We follow the ideas in Braz, Cruz, Freitas and Zingano \cite{MR3955606}  and Guterres,  Nunes and Perusato \cite{MR3853142}. Let $ \alpha(r^\ast)=\min\{\frac{3}{2} + r^\ast, \frac{5}{2} \}$. Taking $ \delta>0 $, multiplying \eqref{eqn:MHDmicropolar} by $ (1+t)^{\alpha(r^\ast)+\delta}(\uu,\ww,\bb) $ and  integrating on $ \mathbb{R}^3 \times (t_0,t) $, we obtain

\begin{align}\label{first_order_inequality}
(1+t)^{\alpha(r^\ast) + \delta} \|\zz(t) \|_{L^2}^2 & + 2\min\{\mu,\gamma,\nu\} \intop_{t_0}^t (1+s)^{{\alpha(r^\ast) + \delta}}\|\nabla \zz(s) \|_{L^ 2}^2 ds \notag \\ & \leq 
C \int _{t_0} ^t (1+s)^{\alpha(r^\ast) + \delta - 1} \Vert \zz (s) \Vert _{L^2} ^2 \, ds \notag \\ & \leq C (1 + t) ^{\delta},
\end{align}
where we used Theorem \ref{decay-z} on the right hand side. 

We now use the notation $D_k = \partial _{x_k}, D^2 = \sum _{i,j} D_i D_j$.  Taking $D_k$ in the first three equations in \eqref{eqn:MHDmicropolar}, multiplying by $ (1+t)^{\alpha(r^\ast) + \delta +1} (D_k\uu,D_k \ww,D_k\bb)$ and summing up, after integrating in $\RR^3 \times (t_0, t)$ we obtain
\begin{align}
\label{eqn:first-estimate-long}
\begin{aligned}
(1+t)^{\alpha(r^*) + \delta +1 } \Vert \nabla \zz  \|_{L^2}^2  + 2(\mu + \chi) \int_{t_0}^{t}(1+s)^{\alpha(r^*) + \delta + 1} \|D^2 \uu (s)\|_{L^2}^2 ds \\  + 2\,\gamma\int_{t_0}^{t}(1+s)^{\alpha(r^*) + \delta + 1} \|D^2 \ww (s)\|_{L^2}^2 ds  + 2\,\nu\int_{t_0}^{t}(1+s)^{\alpha(r^*) + \delta + 1} \|D^2 \bb (s)\|_{L^2}^2 ds \notag \\  + 2 \int_{t_0}^{t} (1+s)^{\alpha(r^*) + \delta + 1} \|\nabla  \,(\nabla\cdot \ww) (s)\|_{L^2}^2 ds   + 4 \chi \int_{t_0}^{t} (1+s)^{\alpha(r^*) + \delta + 1} \|\nabla \ww(s) \|_{L^2}^2 ds \notag \\
  = \int _{t_0}^t (1+s)^{\alpha(r^\ast) +\delta } \| \nabla \zz (s) \|_{L^2}^2 ds   + 4 \chi \sum _{k = 1} ^3 \int _{t_0} ^t (1+s)^{\alpha(r^*) + \delta + 1}  \langle \nabla \times D_k \uu  (s), D_k \ww (s) \rangle \, ds \notag \\  + 2 \int _{t_0} ^t (1+s)^{\alpha(r^*) + \delta + 1} \int _{\RR^3} \sum _{i,j,k} D_j D_k \uu _i (x,s) \cdot D_k \left( \uu_j (x,s) \uu_i (x,s) \right) \, dx \, dx \notag \\  - 2 \int _{t_0} ^t (1+s)^{\alpha(r^*) + \delta + 1} \int _{\RR^3} \sum _{i,j,k} D_j D_k \bb _i (x,s) \cdot D_k \left( \bb_j (x,s) \bb_i (x,s) \right) \, dx \, ds \notag \\   + 2 \int _{t_0} ^t (1+s)^{\alpha(r^*) + \delta + 1} \int _{\RR^3} \sum _{i,j,k} D_j D_k \uu _i (x,s) \cdot D_k \left( \uu_j (x,s) \ww_i (x,s) \right) \, dx \, ds \notag \\  + 2 \int _{t_0} ^t (1+s)^{\alpha(r^*) + \delta + 1} \int _{\RR^3} \sum _{i,j,k} D_j D_k \uu _i (x,s) \cdot D_k \left( \uu_j (x,s) \bb_i (x,s) \right) \, dx \, ds \notag \\  - 2 \int _{t_0} ^t (1+s)^{\alpha(r^*) + \delta + 1} \int _{\RR^3} \sum _{i,j,k} D_j D_k \bb _i (x,s) \cdot D_k \left( \bb_j (x,s) \uu_i (x,s) \right) \, dx \, ds. \notag
\end{aligned}
\end{align}
By Cauchy-Schwarz

\begin{align}
4 \chi \sum _{k = 1} ^3 \int _{t_0} ^t (1+s)^{\alpha(r^*) + \delta + 1}  \langle \nabla \times D_k \uu  (s), D_k \ww (s) \rangle \, ds \notag \\ \leq 4 \chi \int _{t_0} ^t (1+s)^{\alpha(r^*) + \delta + 1} \left( \|\nabla \ww(s) \|_{L^2}^2 + \|D^2 \uu (s)\|_{L^2}^2 \right) \, ds. \notag
\end{align}
For  $D_j D_k f_i (x,s) \cdot D_k \left( g_j (x,s) h_i (x,s) \right)$, where $f,g,h \in \{ \uu, \ww, \bb \}$ we have that

\begin{displaymath}
\int _{\RR^3} D_j D_k f _i (x,s) \cdot D_k \left( g_j (x,s) h_i (x,s) \right) \, dx \ \leq C \Vert \zz (s) \Vert _{L^{\infty}} \Vert \nabla \zz (s) \Vert _{L^2} \Vert D^2 \zz (s) \Vert _{L^2}.
\end{displaymath}
We then obtain

\begin{align*}
(1+t)^{\alpha(r^\ast) +\delta +1} \|\nabla \zz (t) \|_{L^2}^2 & + C \int_{t_0}^t (1+s)^{\alpha(r^\ast) +\delta +1} \| D^2 \zz(s) \|_{L^2}^2 ds \\ & \leq \int_{t_0}^t (1+s)^{\alpha(r^\ast) +\delta } \|\nabla \zz (s) \|_{L^2}^2 ds \\ & + C \int _{t_0}^t (1+s)^{\alpha(r^\ast) +\delta +1}\|\zz(s)\|_{L^\infty} \|\nabla\zz(s)\|_{L^2} \| D^2 \zz(s) \|_{L^2} ds \\ & \leq  \int_{t_0}^t (1+s)^{\alpha(r^\ast) +\delta } \|\nabla \zz (s) \|_{L^2}^2 ds \\ & + C \intop_{t_0}^t (1+s)^{\alpha(r^\ast) +\delta +1} \Vert \nabla \zz (s) \Vert _{L^2} ^{\frac{1}{2}} \| D^2 \zz(s) \|_{L^2}^2 ds,
\end{align*}
where we used that  $\|\zz \|_{L^\infty} \|\nabla \zz \|_{L^2} \leq \|\zz \|_{L^2}^{1/2} \|\nabla \zz \|^{1/2}_{L^2} \| D^2 \zz\|_{L^2}$, (see Kreiss, Hagstrom, Lorenz and Zingano \cite{MR1994780}) and that $\Vert \zz (t) \Vert _{L^2} \leq C$. For  some large enough $t_0$, we have that if $t > t_0$, then  $\Vert \nabla \zz (t) \Vert _{L^2}$ is small enough for the last term on the right hand side to absorbed by the second term in the left hand side. Hence

\begin{align}\label{2nd_order_inequality_done} 
(1+t)^{\alpha(r^\ast) +\delta +1} \|\nabla \zz (t) \|_{L^2}^2 & + \min\{\mu,\gamma,\nu\}\intop_{t_0}^t (1+s)^{\alpha(r^\ast) +\delta +1} \|\nabla^2 \zz(s) \|_{L^2}^2 ds \\ &  \leq C (1+t)^\delta \notag 
\end{align}
which is the result we wanted to prove. 
\end{proof}

\begin{proof}[Proof of Theorem \ref{decay-w}] As in the proof of Theorem \ref{decay-z}, we assume solutions are regular enough. We note that the equation for $\ww$ in (\ref{eqn:MHDmicropolar}) can be written as

\begin{displaymath}
\partial_t \ww   =  \mathbb{L} \ww  + \chi \nabla \times \uu - (\uu \cdot \nabla) \ww,
\end{displaymath}
where $\mathbb{L} = \gamma \Delta \ww +  \nabla (\nabla \cdot \ww) -2 \chi \ww$.  Let $t_0 > 0$ be as in Lemma \ref{gradient-z-decay}. Then, for $\mathcal{L} =  \gamma \Delta \ww +  \nabla (\nabla \cdot \ww)$, we have that

\begin{align}
\label{eqn:solucao-w}
\ww (x,t) & = e^{- 2 \chi (t - t_0)} e^{\mathcal{L} (t - t_0)} \ww (x, t_0) \notag \\ & - \int _{t_0} ^t e^{- 2 \chi (t - s)} e^{\mathcal{L} (t - s)} \left( \nabla \times \uu - (\uu \cdot \nabla) \ww \right) (x,s) \, ds.
\end{align}
First, as $\Vert e^{\mathcal{L}t}v \Vert _{L^2} \leq C \Vert v \Vert _{L^2}$, we have that

\begin{equation}
\label{eqn:firstestimatew}
\Vert e^{- 2 \chi (t - t_0)} e^{\mathcal{L} (t - t_0)} \ww (t_0) \Vert _{L^2} \leq C e^{- 2 \chi (t - t_0)}, \qquad C = C \left( \Vert \ww _0 \Vert _{L^2} \right).
\end{equation}
As $\Vert \nabla \times u \Vert _{L^2} \leq C \Vert \nabla u \Vert _{L^2}$, then

\begin{equation}
\label{eqn:secondestimatew}
\Vert e^{\mathcal{L} (t - s)} \nabla \times \uu (s)  \Vert _{L^2} \leq C \Vert \nabla \uu (s)  \Vert _{L^2} \leq C (1 + s) ^{- \frac{1}{2} \min \{ \frac{5}{2} + r^{\ast}, \frac{7}{2}\}}, 
\end{equation}
because of Lemma \ref{gradient-z-decay}. Also

\begin{align}
\label{eqn:thirdestimatew}
\Vert e^{\mathcal{L} (t - s)} (\uu \cdot \nabla) \ww (s)  \Vert _{L^2} & \leq \Vert e^{\Delta (t - s)} (\uu \cdot \nabla) \ww (s)  \Vert _{L^2} \leq \Vert e^{\Delta (t - s)} \Vert _{L^2} \Vert (\uu \cdot \nabla) \ww (s) \Vert _{L^1} \notag \\ & \leq \Vert e^{\Delta (t - s)} \Vert _{L^2} \Vert u(s)  \Vert _{L^2} \Vert \nabla \ww (s) \Vert _{L^2} \notag \\ & \leq C (t - s) ^{-\frac{3}{4}} (1 + s) ^{- \frac{1}{2} \min \{ \frac{5}{2} + r^{\ast}, \frac{7}{2}\}},
\end{align}
where we used (\ref{eqn:heat_estimate1})  and Lemma \ref{gradient-z-decay}. Now, from (\ref{eqn:solucao-w}) - (\ref{eqn:thirdestimatew}), we obtain

\begin{align}
\Vert \ww (t) \Vert _{L^2} & \leq C e^{- 2 \chi (t - t_0)} + \int_{t_0} ^t e^{- 2 \chi (t - s)} (1 + s) ^{- \min \{ \frac{5}{2} + r^{\ast}, \frac{7}{2}\}}  \left( 1 +  (t - s) ^{-\frac{3}{4}}  \right) \, ds  \notag \\ & \leq \tilde{C} (1 + t) ^{- \frac{1}{2} \min \{ \frac{5}{2} + r^{\ast}, \frac{7}{2}\}}, \quad \forall t>t_0 \notag.
\end{align}
We now conclude the proof, showing that the estimate also holds in $0 < t < t_0$. Indeed, letting $M =\max_{0\leq\tau\leq t_0}\{(1+\tau)^\kappa \|\ww(\tau) \|_{L^2} \}$, where we set   $\kappa  = \min \{ \frac{5}{4} + \frac{r^{\ast}}{2}, \frac{7}{4}\} $, we clearly get   $ \Vert \ww (t) \Vert _{L^2} \leq C (1+t) ^{-\kappa}$, for all  $t > 0$, with $ C =\max\{M,\tilde{C}\} $.
\end{proof}

\subsection{Proof of Theorem \ref{decay-difference}} We will need a gradient decay estimate.

\begin{Lemma}
\label{lemma-estimate-w-and-z}
		Let $\zz$ be a weak solution to (\ref{eqn:MHDmicropolar}), with $32 \, \chi (\mu + \chi + \gamma) > 1, \nu > 0$. Let $r^{\ast} (\zz_0) = r^{\ast}$ be the decay character of $\zz _0$, with $- \frac{3}{2} < r^{\ast} < \infty$. Then, 

\begin{align*}
 \Vert \nabla \ww (t) \Vert _{L^2} ^2 & + \Vert D^2 \zz (t) \Vert _{L^2} ^2 + (1+t)\Vert D^3 \zz (t) \Vert _{L^2} ^2 \notag \\ & \leq C (1 + t) ^{- \min \{ \frac{7}{2} + r^{\ast}, \frac{9}{2}\} }, \qquad \forall \, t > t_0,
\end{align*}		
for some $t_0 = t_0 (\Vert \zz_0 \Vert _{L^2})$.
	\end{Lemma}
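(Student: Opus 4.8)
\emph{Overall strategy.} Write $\alpha = \alpha(r^\ast) = \min\{\tfrac32 + r^\ast, \tfrac52\}$ and let $\beta^\ast = \alpha + 2 = \min\{\tfrac72 + r^\ast, \tfrac92\}$ be the target rate. The three quantities in the statement sit at $\beta^\ast$ for structural reasons: differentiating once gains one power of decay (compare Theorem \ref{decay-z} at order zero with Lemma \ref{gradient-z-decay} at order one), so $\Vert D^2 \zz \Vert_{L^2}^2$ should decay like $(1+t)^{-(\alpha+2)} = (1+t)^{-\beta^\ast}$ and $\Vert D^3 \zz \Vert_{L^2}^2$ like $(1+t)^{-(\alpha+3)}$, which is $(1+t)^{-\beta^\ast}$ after the prefactor $(1+t)$; meanwhile the linear damping $-2\chi\ww$ grants $\ww$ one extra power, as in Theorem \ref{decay-w}, pushing $\Vert \nabla \ww \Vert_{L^2}^2$ from $\nabla\zz$'s rate $\alpha+1$ up to $\beta^\ast$. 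The plan is therefore to run the weighted energy bootstrap of Lemma \ref{gradient-z-decay} two derivative orders higher to get the $D^2\zz$ and $D^3\zz$ bounds, and to treat $\nabla\ww$ separately through the damped Duhamel representation \eqref{eqn:solucao-w}.

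\emph{Energy estimates for $D^2\zz$ and $D^3\zz$.} The input is the dissipation bound already contained in \eqref{2nd_order_inequality_done}, namely $\int_{t_0}^t (1+s)^{\alpha+\delta+1}\Vert D^2\zz(s)\Vert_{L^2}^2\,ds \leq C(1+t)^\delta$. First I would apply $D_iD_j$ to the first three equations of \eqref{eqn:MHDmicropolar}, multiply by $(1+s)^{\alpha+\delta+2} D^2\zz$, sum and integrate over $\RR^3\times(t_0,t)$. On the left this produces $(1+t)^{\alpha+\delta+2}\Vert D^2\zz\Vert_{L^2}^2$ together with the good dissipation $\int(1+s)^{\alpha+\delta+2}\Vert D^3\zz\Vert_{L^2}^2$ and the damping term $4\chi\int(1+s)^{\alpha+\delta+2}\Vert D^2\ww\Vert_{L^2}^2$; the leading term on the right is $\int(1+s)^{\alpha+\delta+1}\Vert D^2\zz\Vert_{L^2}^2 \leq C(1+t)^\delta$. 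The coupling terms coming from $\chi\nabla\times$ are split by Young's inequality and absorbed, and the nonlinear terms, after one integration by parts and the use of $\div\,\uu = \div\,\bb = 0$, reduce to integrals of the type $\int D^3\zz \cdot D^2(\zz\otimes\zz)$, which Leibniz, H\"older and the Gagliardo--Nirenberg inequalities (e.g. $\Vert\zz\Vert_{L^\infty}\leq C\Vert\zz\Vert_{L^2}^{1/4}\Vert D^2\zz\Vert_{L^2}^{3/4}$) bound by a small multiple of $\int(1+s)^{\alpha+\delta+2}\Vert D^3\zz\Vert_{L^2}^2$ plus controlled lower-order integrals, the smallness coming from choosing $t_0$ large so that $\Vert\nabla\zz(t)\Vert_{L^2}$ is small. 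Dividing by $(1+t)^{\alpha+\delta+2}$ gives $\Vert D^2\zz\Vert_{L^2}^2\leq C(1+t)^{-\beta^\ast}$ together with the new dissipation bound $\int_{t_0}^t(1+s)^{\alpha+\delta+2}\Vert D^3\zz\Vert_{L^2}^2\,ds\leq C(1+t)^\delta$. Repeating the argument once more (apply $D^3$, weight $(1+s)^{\alpha+\delta+3}$, feed in the bound just obtained) yields $\Vert D^3\zz\Vert_{L^2}^2\leq C(1+t)^{-(\alpha+3)}$, that is $(1+t)\Vert D^3\zz\Vert_{L^2}^2\leq C(1+t)^{-\beta^\ast}$.

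\emph{The gradient of $\ww$.} For $\nabla\ww$ the energy method alone is not enough, because the coupling term $\chi\nabla\times\uu$ forces a weighted time integral that loses one power; the extra decay must instead be harvested from the exponential factor produced by the damping. I would therefore differentiate \eqref{eqn:solucao-w}, writing $\nabla\ww(t) = e^{-2\chi(t-t_0)}\nabla e^{\mathcal{L}(t-t_0)}\ww(t_0) - \int_{t_0}^t e^{-2\chi(t-s)}\nabla e^{\mathcal{L}(t-s)}\bigl(\chi\nabla\times\uu - (\uu\cdot\nabla)\ww\bigr)(s)\,ds$. The first term is exponentially small; the coupling term is bounded by $C\int_{t_0}^t e^{-2\chi(t-s)}\Vert D^2\uu(s)\Vert_{L^2}\,ds$, and inserting the decay $\Vert D^2\uu(s)\Vert_{L^2}^2\leq C(1+s)^{-\beta^\ast}$ just obtained, the exponential kernel localizes the integral near $s=t$ and returns $C(1+t)^{-\beta^\ast/2}$; the nonlinear term is handled by placing one derivative on the semigroup and using $\Vert\nabla e^{\mathcal{L}(t-s)}(\uu\cdot\nabla)\ww(s)\Vert_{L^2}\leq C(t-s)^{-1/2}\Vert\uu(s)\Vert_{L^\infty}\Vert\nabla\ww(s)\Vert_{L^2}$, with $\Vert\uu\Vert_{L^\infty}$ and $\Vert\nabla\ww\Vert_{L^2}$ controlled by Theorems \ref{decay-z}, \ref{decay-w} and Lemma \ref{gradient-z-decay} (the weaker, already known bound for $\nabla\ww$ breaks the apparent circularity), so that the integrable kernel $(t-s)^{-1/2}e^{-2\chi(t-s)}$ again yields a rate no slower than $\beta^\ast/2$. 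Squaring gives $\Vert\nabla\ww(t)\Vert_{L^2}^2\leq C(1+t)^{-\beta^\ast}$, and extending the bound to $0<t\leq t_0$ by the continuity and maximum argument used at the end of Theorem \ref{decay-w} closes the proof.

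\emph{Main obstacle.} The delicate points are the nonlinear-term bookkeeping. At the $D^2$ and $D^3$ levels one must verify that every Leibniz term is absorbable into the dissipation after the correct $3D$ Gagliardo--Nirenberg interpolation together with the large-$t_0$ smallness; and for $\nabla\ww$ one must route the gradient in the Duhamel integral so as to keep an integrable kernel (a careless distribution produces a non-integrable $(t-s)^{-5/4}$ singularity) while still capturing the extra power supplied by the exponential damping. This last balance is where the argument is least automatic.
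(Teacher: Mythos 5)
Your proposal is correct and follows essentially the same route as the paper: the weighted energy bootstrap at derivative orders two and three (with the same weights $(1+s)^{\alpha+\delta+2}$, $(1+s)^{\alpha+\delta+3}$, absorption via Gagliardo--Nirenberg and large $t_0$), followed by the damped Duhamel representation for $\nabla\ww$ in which the coupling term $\chi\nabla\times\uu$ is controlled by the freshly obtained $\Vert D^2\uu\Vert_{L^2}$ decay. The only cosmetic difference is in the nonlinear Duhamel term, where you put one derivative on the semigroup and use $(t-s)^{-1/2}\Vert\uu\Vert_{L^\infty}\Vert\nabla\ww\Vert_{L^2}$, while the paper keeps the derivative on the nonlinearity and uses $L^4$ bounds from Sobolev embedding; both yield a rate strictly faster than the required $\min\{\frac{7}{4}+\frac{r^\ast}{2},\frac{9}{4}\}$.
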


\begin{proof} We follow the ideas in the proof of Lemma \ref{gradient-z-decay}. Differentiating \eqref{eqn:MHDmicropolar} with respect to $ x_{\ell_1} $ and $ x_{\ell_2} $, multiplying by $(1+t)^{{\alpha(r^\ast) + \delta + 2}} D_{\ell_1} D_{\ell_2} \left(\uu, \ww, \bb \right)$,   integrating the result on $ \RR^3 \times [t_0,t] $ and summing up we obtain

\begin{align*}
 (1+t)^{\alpha(r^\ast) + \delta + 2} \| D^2 \zz(t)\|_{L^2}^2 & + C \int _{t_0}^t  (1+s)^{\alpha(r^\ast) +\delta +2} \| D^3 \zz(s) \|_{L^2}^2 ds \\ & \leq C\intop_{t_0}^t  (1+s)^{\alpha(r^\ast) +\delta +1} \| D^2 \zz(s) \|_{L^2}^2 ds \\ & + C \int_{t_0}^t (s-t_0)^2 \| D^3 \zz(s)\|_{L^2} \|\zz(s)\|_{L^\infty} \| D^2 \zz(s)\|_{L^2} \, ds \\ & + C 
\int_{t_0}^t (s-t_0)^2 \| D^3 \zz(s)\|_{L^2} \| \nabla \zz(s)\|_{L^\infty(\RR^3)} \|\nabla \zz(s)\|_{L^2} \, ds  \\ & \leq  C \int _{t_0}^t  (1+s)^{\alpha(r^\ast) +\delta +1} \| D^2 \zz(s) \|_{L^2}^2 ds \\ &  + \int _{t_0}^t  (1+s)^{\alpha(r^\ast) +\delta +2} \|\zz(s) \|_{L^2}^{1/2} \|\nabla \zz(s) \|_{L^2}^{1/2} \| D^3 \zz(s)\|_{L^2} ds,
 \end{align*}   
where we used that $ \|\zz \|_{L^2} \|\nabla^2 \zz\|_{L^2} \leq C \|\zz \|_{L^2}^{1/2} \|\nabla \zz \|_{L^2}^{1/2} \|\nabla^3 \zz \|_{L^2}$. By \eqref{2nd_order_inequality_done}, we have
\begin{align}\label{3rd_order_inequality}
(1+t)^{\alpha(r^\ast) +\delta +2} \| D^2 \zz (t) \|_{L^2}^2 & + \min\{\mu,\gamma,\nu\}\intop_{t_0}^t (1+s)^{\alpha(r^\ast) +\delta +1} \| D^3 \zz(s) \|_{L^2}^2 ds \notag\\ &  \leq C (1+t)^\delta. 
\end{align}
Now, applying the same argument as in the previous Lemma (see also Guterres, Melo, Nunes and Perusato \cite{MR3906315}, for more details), we obtain
 \begin{equation*}
 (1+t)^{\alpha(r^\ast) + \delta + 3} \| D^3 \zz(t)\|_{L^2}^2 + C \int _{t_0}^t  (1+s)^{\alpha(r^\ast) +\delta +3} \| D^4 \zz(s) \|_{L^2}^2 ds \leq C(1+t)^\delta
 \end{equation*}
which yields
\begin{displaymath}
\Vert \nabla^2 \zz (t) \Vert _{L^2} ^2 + (1+t)\Vert \nabla^3 \zz (t) \Vert _{L^2} ^2 \leq C (1 + t) ^{- \min \{ \frac{7}{2} + r^{\ast}, \frac{9}{2}\} }.
\end{displaymath}
Furthermore, by standard Sobolev embeddings, we  obtain 
\begin{equation}\label{eqn:L4norm} 
   \|\nabla^j \zz(t)\|_{L^4}^2 \leq C(1+t)^{-\alpha(r^\ast) - 3/4 - j },
\end{equation}   
for all sufficiently large $t >0$   and  each $0 \leq j  \leq 2$. This particular bound allows us to estimate the gradient of the microrotational field. Indeed, for $\mathcal{L} =  \gamma \Delta +  \nabla (\nabla \cdot )$ we have
\begin{align}
\| \nabla\ww(t) \|_{L^2} & \leq e^{-2\,\chi\,(t-t_0)}\|\nabla e^{ C\mathcal{L}(t-t_0) } \zz(t_0) \|_{L^2}   \notag \\ & + \chi \int_{t_0}^t e^{-2\,\chi\,(t-s)} \|\nabla \,e^{C\mathcal{L}(t-s)} (\nabla \times \,\uu(s)\,) \|_{L^2} ds \notag \\ & + \int_{t_0}^t e^{-2\,\chi\,(t-s)  } \|e^{C\mathcal{L}(t-s)} ( \nabla(\uu\cdot \nabla\ww) (s) ) \|_{L^2} ds  \notag \\ & \leq C e^{-2\,\chi\,(t-t_0)}+ \chi\,\int_{t_0}^t e^{-2\,\chi\,(t-s)} \| \,e^{C\Delta(t-s)} \nabla^2\uu(s)\, \|_{L^2} ds \notag \\ & + C\,\int _{t_0}^t e^{-2\,\chi\,(t-s)  } \|e^{C\Delta(t-s)} \{\nabla(\uu\cdot \nabla\ww) (s) \}\|_{L^2} ds \notag \\ & \leq C e^{-2\,\chi\,(t-t_0)} + \chi \int_{t_0}^t e^{-2\,\chi\,(t-s)} \|  \nabla^2\uu(s)\, \|_{L^2} ds \notag \\ & + C\,\int _{t_0}^t e^{-2\,\chi\,(t-s)  } \| \{\nabla(\uu\cdot \nabla\ww) (s) \}\|_{L^2} ds. \notag
\end{align}    
By \eqref{eqn:L4norm} and Lemma \ref{gradient-z-decay}, we have

\begin{align}
\|\nabla \ww (t) \|_{L^2} & \leq C e^{-2\,\chi\,(t-t_0)}   + C\,\int _{t_0}^t e^{-2\,\chi\,(t-s)} (1+s)^{-\frac{\alpha(r^\ast)}{2} - 1} ds \notag \\ & + C\,\int _{t_0}^t e^{-2\,\chi\,(t-s)} \left( \|\zz(s) \|_{L^4} \|\nabla^2 \zz(s) \|_{L^4} + \|\nabla \zz(s) \|_{L^4}^2 \right) ds \notag \\ & \leq C e^{-2\,\chi\,(t-t_0)} + C\,\int _{t_0}^t e^{-2\,\chi\,(t-s)} (1+s)^{-\frac{\alpha(r^\ast)}{2} - 1} ds \notag \\ & + \int _{t_0}^t e^{-2\chi(t-s) } (1+s)^{- \alpha(r^\ast) - \frac{7}{4} } ds \notag \\ & \leq C e^{-2\,\chi\,(t-t_0)} + C\,\int _{t_0}^t e^{-2\,\chi\,(t-s)} (1+s)^{-\frac{\alpha(r^\ast)}{2} - 1} ds \notag \\ & \leq C(1+t)^{-\frac{\alpha(r^\ast)}{2} - 1} \notag
\end{align}
which concludes the proof. 
\end{proof}

\begin{proof}[Proof of Theorem \ref{decay-difference}]  We shall first prove \eqref{error_z}. For regular enough solutions, the  integral representation of $ \zz(t) $ and $ \bar{\zz}(t) $ and standard Leray's projector properties lead to 
\begin{equation}\label{eqn:z_Duhamel_error1}
\|\zz(t) - \bar{\zz}(t) \|_{L^2} \leq \intop_{t_0}^{t}  \|e^{\mathbb{A}\,(t - \tau)}  {\bf  \mathcal{Q}(\tau)}\|_{L^2} d\tau,
\end{equation}
where 
\begin{equation*}
\mathcal{Q}(t) = \left[	\begin{array}{c}
-(\uu \cdot \nabla) \uu(t)  \:\! + (\bb \cdot \nabla) \bb(t) \:\\ - (\uu \cdot \nabla) \ww (t)\:\\ -(\uu \cdot \nabla) \bb(t)\:\! + (\bb \cdot \nabla) \uu(t) \:\!
\end{array} \right] = \nabla \cdot \left[ \begin{array}{c}
-\uu \otimes \uu(t) + \bb \otimes \bb(t) \\ - \uu \otimes \ww (t) \\ - \uu \otimes \bb(t)\:\! + \bb \otimes \uu(t) 
\end{array} \right]
\end{equation*} 
and 	$ \bigl(e^{\mathbb{A}\,t}\bigr)_{t \geq t_0} $ is the semigroup associated to the linear system \eqref{eqn:linear}. Using Lemma \ref{lemma-eigenvalue} and Plancherel identity, we obtain
\begin{equation}\label{eqn:semigroup_estimate}
\intop_{t_0}^{t}  \|e^{\mathbb{A}\,(t - \tau)}  {\bf  \mathcal{Q}(\tau)}\|_{L^2} d\tau \leq \intop_{t_0}^{t}  \|e^{C\Delta\,(t - \tau)}  {\bf  \mathcal{Q}(\tau)}\|_{L^2} d\tau.
\end{equation}
Using the heat kernel estimate (\ref{eqn:heat_estimate1}), we estimate the convolution term in two different ways, namely

\begin{align}
\label{eqn:first-part-nonlinear-term}
\Vert e^{\Delta (t - \tau)}  {\bf  \mathcal{Q}(\tau)} \Vert _{L^2} &  = \Vert \nabla e^{\Delta (t - \tau)} \mathcal{NL} (\uu, \ww, \bb) (\tau) \Vert _{L^2} \notag \\ & \leq C \Vert \nabla e^{\Delta (t - \tau)}  \Vert _{L^2} \Vert  \mathcal{NL} (\uu, \ww, \bb) (\tau) \Vert _{L^1} \notag \\ & \leq C (t-\tau)^{-\frac{5}{4}}  \|\zz(\tau) \|^2_{L^2}
\end{align}
where $\mathcal{Q}(t) = \nabla \mathcal{NL} (\uu, \ww, \bb)$, and 

\begin{align}
\label{eqn:second-part-nonlinear-term}
\Vert e^{\Delta (t - \tau)}  {\bf  \mathcal{Q}(\tau)} \Vert _{L^2} &  \leq C  \Vert e^{\Delta (t - \tau)}  \Vert _{L^2} \Vert  \mathcal{Q}(\tau) \Vert _{L^1} \notag \\ & C \leq (t-\tau)^{-3/4}  \|\zz(\tau) \|_{L^2} \|\nabla \zz(\tau) \|_{L^2}
\end{align}
Then \eqref{eqn:z_Duhamel_error1} - \eqref{eqn:second-part-nonlinear-term} imply 
 \begin{align*}
\|\zz(t) - \bar{\zz}(t) \|_{L^2} & \leq \intop_{t_0}^{\frac{t_0 +t}{2}}  (t-\tau)^{-\frac{5}{4}}  \|\zz(\tau) \|^2_{L^2} d\tau + \intop_{\frac{t_0 + t}{2}}^{t}  (t-\tau)^{-\frac{3}{4}}  \|\zz(\tau) \|_{L^2} \|\nabla \zz(\tau) \|_{L^2} d\tau \notag \\ & =  \mathcal{I}_1(t) + \mathcal{I}_2(t).
 \end{align*} 
Using Theorem \ref{decay-z}, a straightforward calculation leads to 

\begin{displaymath}
\mathcal{I}_1(t) \leq C (1 + t) ^{- \left( \frac{7}{4} + r^{\ast} \right)}, \text{\, for } r^{\ast} <  - \frac{1}{2}, \qquad \mathcal{I}_1(t) \leq C (1 + t) ^{- \frac{5}{4}}, \text{\, for } r^{\ast} >  - \frac{1}{2}.
\end{displaymath}
For $r^{\ast} = - \frac{1}{2}$, we have that

\begin{displaymath}
\mathcal{I}_1(t) \leq C (1 + t) ^{- \frac{5}{4} + \delta}, \qquad \forall \, \delta > 0.
\end{displaymath}
Analogously, from Theorem \ref{decay-z} and Lemma \ref{gradient-z-decay}

\begin{displaymath}
\mathcal{I}_2(t) \leq C (1 + t) ^{- \min \{\frac{15}{4} + 2 r^{\ast}, \frac{23}{4} \}}, \qquad -\frac{3}{2} < r^{\ast} < \infty.
\end{displaymath}
As a result of this, the estimate is true for all $t > t_0$. An argument similar to that in page 239 in Kreiss, Hagstrom, Lorenz and Zingano \cite{MR1994780} allows to extend the bound to all $t > 0$. 

 In order to prove \eqref{error_w}, we will need the following estimate.

 \begin{Lemma} 
\label{lemma-gradient-z-barz} 
 Let $\zz$ be a weak solution to (\ref{eqn:MHDmicropolar}) , with $32 \, \chi (\mu + \chi + \gamma) > 1, \nu > 0$. Let $r^{\ast} (\zz_0) = r^{\ast}$ be the decay character of $\zz _0$, with $- \frac{3}{2} < r^{\ast} < \infty$. Then,  
 	\begin{displaymath}
 	\Vert \nabla\zz (t) - \nabla\bar{{\zz}}(t) \Vert _{L^2} ^2 \leq C (1 + t) ^{- \min\{\frac{9}{4} + 2r^\ast,\,\frac{7}{4}\}}, \qquad \forall t >t_0,
 	\end{displaymath}
 	for some $t_0 = t_0 (\Vert \zz_0 \Vert _{L^2})$.
\end{Lemma}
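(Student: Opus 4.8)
The plan is to run the argument used above for \eqref{error_z}, carrying one additional spatial derivative throughout. Differentiating the Duhamel representation of the difference, I would write
\[
\nabla\zz(t) - \nabla\bar{\zz}(t) = \int_{t_0}^{t} \nabla e^{\mathbb{A}(t-\tau)}\mathcal{Q}(\tau)\, d\tau,
\]
where $\mathcal{Q} = \nabla\cdot\mathcal{NL}(\uu,\ww,\bb)$ is the same divergence-form bilinear term appearing in the proof of Theorem \ref{decay-difference}, with $\mathcal{NL}$ schematically of the form $\zz\otimes\zz$. As in \eqref{eqn:semigroup_estimate}, Lemma \ref{lemma-eigenvalue} and the Plancherel identity let me dominate the matrix semigroup by the scalar heat semigroup, so it suffices to bound $\int_{t_0}^{t}\|\nabla e^{C\Delta(t-\tau)}\mathcal{Q}(\tau)\|_{L^2}\,d\tau$.

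Next I would split the integral at the midpoint, $\int_{t_0}^{t} = \int_{t_0}^{(t_0+t)/2} + \int_{(t_0+t)/2}^{t}$, and estimate the convolution in two complementary ways via the heat-kernel inequality \eqref{eqn:heat_estimate1}. On the first interval, where $\tau$ stays far from $t$, I would put both derivatives on the kernel: since $\mathcal{NL}\in L^1$ with $\|\mathcal{NL}(\tau)\|_{L^1}\leq C\|\zz(\tau)\|_{L^2}^2$, this yields
\[
\|\nabla e^{C\Delta(t-\tau)}\mathcal{Q}(\tau)\|_{L^2} = \|\nabla^2 e^{C\Delta(t-\tau)}\mathcal{NL}(\tau)\|_{L^2} \leq C (t-\tau)^{-\frac{7}{4}}\|\zz(\tau)\|_{L^2}^2,
\]
and Theorem \ref{decay-z} controls the resulting time integral. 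On the second interval, where $\tau\to t$ and the kernel singularity must stay integrable, I would instead keep both derivatives on the bilinear term, giving
\[
\|\nabla e^{C\Delta(t-\tau)}\mathcal{Q}(\tau)\|_{L^2} = \|e^{C\Delta(t-\tau)}\nabla^2\mathcal{NL}(\tau)\|_{L^2} \leq C (t-\tau)^{-\frac{3}{4}}\|\nabla^2\mathcal{NL}(\tau)\|_{L^1},
\]
and bound $\|\nabla^2\mathcal{NL}(\tau)\|_{L^1}\leq C\big(\|D^2\zz(\tau)\|_{L^2}\|\zz(\tau)\|_{L^2} + \|\nabla\zz(\tau)\|_{L^2}^2\big)$ by Cauchy-Schwarz.

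At this point the three inputs Theorem \ref{decay-z}, Lemma \ref{gradient-z-decay} and Lemma \ref{lemma-estimate-w-and-z} make both time integrals explicit. Inserting these rates and carefully separating the regimes of $r^\ast$ below and above the value at which the caps switch on, the powers of $(1+t)$ add up to the claimed bound for all $t>t_0$; this is exactly the computation already performed for $\mathcal{I}_1$ and $\mathcal{I}_2$ in the proof of \eqref{error_z}, now with one extra derivative on the heat kernel.

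The step I expect to be the main obstacle is the second interval. The naive balancing that distributes one derivative to the kernel would produce the factor $(t-\tau)^{-5/4}$, which fails to be integrable as $\tau\to t$; this is what forces both derivatives onto $\mathcal{NL}$ and hence makes the second-order decay estimate $\|D^2\zz\|_{L^2}$ of Lemma \ref{lemma-estimate-w-and-z} indispensable --- indeed this is why that Lemma is established beforehand. Once it is available, the remaining work is the routine bookkeeping of powers of $(1+t)$.
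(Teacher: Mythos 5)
Your proposal is correct, and it follows the paper's own strategy (Duhamel formula, domination of $e^{\mathbb{A}(t-\tau)}$ by the heat semigroup via Lemma \ref{lemma-eigenvalue} and Plancherel, splitting at $\tau=(t_0+t)/2$, and on the far interval putting both derivatives on the kernel to get $(t-\tau)^{-7/4}\Vert\zz(\tau)\Vert_{L^2}^2$), but it diverges on the key near-singularity interval. There you write $\nabla e^{C\Delta(t-\tau)}\mathcal{Q}=e^{C\Delta(t-\tau)}\nabla^2\mathcal{NL}$ and use the $L^1\to L^2$ smoothing bound $(t-\tau)^{-3/4}\Vert\nabla^2\mathcal{NL}(\tau)\Vert_{L^1}$, with $\Vert\nabla^2\mathcal{NL}(\tau)\Vert_{L^1}\leq C\bigl(\Vert D^2\zz(\tau)\Vert_{L^2}\Vert\zz(\tau)\Vert_{L^2}+\Vert\nabla\zz(\tau)\Vert_{L^2}^2\bigr)$, so your argument genuinely needs the second-order decay of Lemma \ref{lemma-estimate-w-and-z}. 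The paper instead keeps one derivative on the kernel but trades the $L^1$ pairing for an $L^{4/3}\to L^2$ one: $\Vert\nabla e^{\Delta(t-\tau)}\mathcal{Q}(\tau)\Vert_{L^2}\leq C(t-\tau)^{-7/8}\Vert\zz(\tau)\Vert_{L^4}\Vert\nabla\zz(\tau)\Vert_{L^2}\leq C(t-\tau)^{-7/8}\Vert\zz(\tau)\Vert_{L^2}^{1/4}\Vert\nabla\zz(\tau)\Vert_{L^2}^{7/4}$, which tames the singularity ($7/8<1$) while using only first-order information (Theorem \ref{decay-z} and Lemma \ref{gradient-z-decay} plus Gagliardo--Nirenberg). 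The trade-off: the paper's route makes this lemma self-contained at the level of first derivatives, whereas yours leans on the heavier Lemma \ref{lemma-estimate-w-and-z}; since that lemma is proved beforehand and independently of the present one, there is no circularity, and your bookkeeping does close --- both halves of your integral give $(1+t)^{-(\alpha(r^\ast)+3/4)}$ and $(1+t)^{-\min\{9/4+r^\ast,7/4\}}$ respectively (with the same logarithmic caveat at $r^\ast=-1/2$ that the paper absorbs into a $\delta$), so the far interval dominates exactly as $I_1$ does in the paper and yields the claimed rate. Your diagnosis of why the naive one-derivative $L^1\to L^2$ balancing fails (the non-integrable $(t-\tau)^{-5/4}$) is also accurate; the paper and you simply fix it in two different ways.
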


\begin{proof}[Proof of Lemma \ref{lemma-gradient-z-barz}] We have that
\begin{equation*}
\| \nabla \zz(t) - \nabla \bar{\zz}(t) \|_{L^2} \leq \intop_{t_0}^{t}  \| \nabla e^{\mathbb{A}\,(t - \tau)}  {\bf  \mathcal{Q}(\tau)}\|_{L^2} d\tau.
\end{equation*}
As before
\begin{align*}
\Vert\nabla  e^{\Delta (t - \tau)}  {\bf  \mathcal{Q}(\tau)} \Vert _{L^2} &  = \Vert \nabla ^2 e^{\Delta (t - \tau)} \mathcal{NL} (\uu, \ww, \bb) (\tau) \Vert _{L^2} \notag \\ & \leq C \Vert \nabla^2 e^{\Delta (t - \tau)}  \Vert _{L^2} \Vert  \mathcal{NL} (\uu, \ww, \bb) (\tau) \Vert _{L^1} \notag \\ & \leq C (t-\tau)^{-\frac{7}{4}}  \|\zz(\tau) \|^2_{L^2}
\end{align*}
where $\mathcal{Q}(t) = \nabla \mathcal{NL} (\uu, \ww, \bb)$, and 

\begin{align*}
\Vert \nabla e^{\Delta (t - \tau)}  {\bf  \mathcal{Q}(\tau)} \Vert _{L^2} &  \leq C  \Vert \nabla e^{\Delta (t - \tau)}  \Vert _{L^{\frac{4}{3}}} \Vert  \mathcal{Q}(\tau) \Vert _{L^{\frac{4}{3}}} \notag \\ &  \leq C (t-\tau)^{- \frac{7}{8}}  \|\zz(\tau) \|_{L^4} \|\nabla \zz(\tau) \|_{L^2} \\ & \leq C \leq (t-\tau)^{- \frac{7}{8}}  \|\zz(\tau) \|_{L^2} ^{\frac{1}{4}} \|\nabla \zz(\tau) \|_{L^2} ^{\frac{7}{4}}.
\end{align*}
Then

\begin{align*}
\|\nabla \zz(t) - \nabla \bar{\zz} (t) \|_{L^2} & \leq \int _{t_0}^{\frac{t_0 + t}{2}}  (t-\tau)^{-\frac{7}{4}}  \|\zz(\tau) \|^2_{L^2} d\tau + \int _{\frac{t_0 + t}{2}}^{t}  (t-\tau)^{-\frac{7}{8}}  \|\zz(\tau)\cdot \nabla \zz(\tau)\|_{L^{{\frac{4}{3}}}}  d\tau \\ & \leq \int _{t_0}^{\frac{t_0 + t}{2}}  (t-\tau)^{-\frac{7}{4}}  \|\zz(\tau) \|^2_{L^2} d\tau + \int _{\frac{t_0 + t}{2}}^{t}  (t-\tau)^{-\frac{7}{8}}  \|\zz(\tau) \|_{L^2} ^{\frac{1}{4}} \|\nabla \zz(\tau) \|_{L^2} ^{\frac{7}{4}}  d\tau \\ & =I_1(t) +I_2(t)
\end{align*}
As in the previous Lemma, from Theorem \ref{decay-z} we obtain 

\begin{displaymath}
I_1(t) \leq C (1 + t) ^{- \left( \frac{9}{4} + r^{\ast} \right)}, \text{\, for } r^{\ast} <  - \frac{1}{2}, \qquad I_1(t) \leq C (1 + t) ^{- \frac{7}{4}}, \text{\, for } r^{\ast} >  - \frac{1}{2}.
\end{displaymath}
For $r^{\ast} = - \frac{1}{2}$, we have that

\begin{displaymath}
I_1(t) \leq C (1 + t) ^{- \frac{7}{4} + \delta}, \qquad \forall \, \delta > 0.
\end{displaymath}
From Theorem \ref{decay-z} and Lemma \ref{gradient-z-decay}

\begin{displaymath}
I_2(t) \leq C (1 + t) ^{- \min \{\frac{13}{4} + r^{\ast}, \frac{17}{4} \}}, \qquad -\frac{3}{2} < r^{\ast} < \infty.
\end{displaymath}
Then, the decay is given by $I_1 (t)$ $ \forall t>t_0 $.
\end{proof}

We return to the proof of (\ref{error_w}).  By using Duhamel's principle, we take advantage of the damping term $ -2\chi\,\ww $ to get, after a few computations

\begin{align*}
\|\ww(t) - \bar{\ww}(t) \|_{L^2} & \leq\chi\,\int_{t_0}^t e^{-2\,\chi\,(t-s)  } \|e^{C\mathcal{L}(t-s)} \{\nabla \times (\,\uu(s)- \bar{\uu}(s)\,) \}\|_{L^2} ds \\ & + \,\int _{t_0}^t e^{-2\,\chi\,(t-s)  } \|e^{C\mathcal{L}(t-s)} \{(\uu\cdot \nabla\ww) (s) \}\|_{L^2} ds \\ & \leq \chi\,\int _{t_0}^t e^{-2\,\chi\,(t-s)  } \|e^{C\Delta(t-s)} \{\nabla \times (\,\uu(s)- \bar{\uu}(s)\,) \}\|_{L^2} ds \\ & + \,\int_{t_0}^t e^{-2\,\chi\,(t-s)  } \|e^{C\Delta(t-s)} \{(\uu\cdot \nabla\ww) (s) \}\|_{L^2} ds \\ & =  \mathcal{J}_1(t) + \mathcal{J}_2(t).
\end{align*}

If $ -\frac{3}{2} < r^\ast < -\frac{1}{2} $, by Lemma \ref{lemma-gradient-z-barz} we have
\begin{align*}
\mathcal{J}_1(t) & \leq \int _{t_0}^{(t_0+t)/2} e^{-2\,\chi\,(t-s)  } (1+s)^{-\frac{9}{4} - r^\ast} ds 
+ \int _{(t_0 + t)/2}^t  e^{-2\,\chi\,(t-s)  } (1+s)^{-\frac{9}{4} - r^\ast} ds  \\ & = \mathcal{J}_{11}(t) + \mathcal{J}_{12}(t).
\end{align*} 
We now observe that $ \mathcal{J}_{11}(t) \leq  C e^{-\chi\,t}$ and $  \mathcal{J}_{12}(t) \leq  C (1+t)^{-\frac{9}{4}-r^\ast}$. For the term $ \mathcal{J}_2(t) $, we proceed as follows.
By using the Lemma \ref{lemma-estimate-w-and-z}, we have 
\begin{align*}
\mathcal{J}_{2}(t) & \leq \int _{t_0}^{(t_0 +t)/2}  e^{-2\,\chi\,(t-s)}(t-s)^{-\frac{3}{4}} (1+s)^{-\frac{5}{2} - r^\ast} ds \\ & + \int _{(t_0 + t)/2}^{t} e^{-2\,\chi\,(t-s)} (t-s)^{-\frac{3}{4}} (1+s)^{-\frac{5}{2} - r^\ast} ds \\ & = \mathcal{J}_{21}(t)+\mathcal{J}_{22}(t) .
\end{align*} 
But $ \mathcal{J}_{21}(t) \leq C e^{-\chi\,t} t^{-3/4}$ and 
\begin{equation*}
\mathcal{J}_{22}(t) \leq C (1+t)^{-\frac{5}{2} -r^\ast} \intop_{(t_0 +t)/2}^t e^{-2\,\chi(t-s)} (t-s)^{-3/4} ds \leq C (1+t)^{-\frac{5}{2} -r^\ast} \Gamma(1/4), 
\end{equation*}
where $ \Gamma $ is the  Gamma function. Hence, $ \mathcal{J}_1(t) + \mathcal{J}_2(t) \leq C(1+t)^{-\frac{9}{4} - r^\ast} $.
If  $   - \frac{1}{2} \leq r^{\ast} < 1 $, we similarly obtain $\mathcal{J}_1(t)\leq e^{-\chi\,t}  + C(1+t)^{-\frac{7}{4}}$ and $ \mathcal{J}_2 (t) \leq C e^{-\chi\,t}t^{-3/4} + C(1+t)^{-\frac{5}{2} - r^\ast} $which leads to $ \mathcal{J}_1(t) + \mathcal{J}_2(t) \leq C(1+t)^{-\frac{7}{4}}  $, since $ - \frac{1}{2} \leq r^{\ast} < 1  $. Finally, when $ r^\ast \geq 1 $, we immediately get $ \mathcal{J}_1(t) + \mathcal{J}_2(t) \leq  C(1+t)^{-\frac{7}{4}}$ 
which concludes the proof for $ t>t_0 $ and repeating the argument given in the proof of Theorem \ref{decay-w} or (\ref{error_z}), the upper bound  holds also for $ 0<t<t_0 $ concluding the demonstration.    
\end{proof}

\bibliographystyle{plain}
\bibliography{NichePerusatoBiblio}{}

\end{document}